\documentclass{article}
\usepackage{amsmath,amssymb,amsfonts,amsthm}
\usepackage{bm,graphicx,comment,color,url}
\usepackage{algorithmic,algorithm}
\usepackage{mathscinet}

\usepackage{cancel}
\usepackage[normalem]{ulem}

\newtheorem{theorem}{Theorem}[section]

\newtheorem{lemma}[theorem]{Lemma}
\newtheorem{proposition}[theorem]{Proposition}

\newcommand{\cu}{\mathfrak i}

\usepackage{graphicx}
\usepackage{epstopdf}
\usepackage[misc]{ifsym}
\usepackage{amssymb}
\usepackage{amsmath}
\usepackage{epsfig}
\usepackage{a4wide} 
\usepackage{color}
\usepackage{booktabs}
\usepackage[all]{xy}
\usepackage{latexsym}
\usepackage{graphics}
\usepackage{multirow}
\usepackage{tabularx}
\usepackage{longtable}
\usepackage{multirow}
\usepackage{makecell}
\usepackage{mathrsfs}
\usepackage{delarray}
\usepackage{subfig}

\begin{document}
\title{
Computing eigenvalues of semi-infinite quasi-Toeplitz matrices\thanks{This work has been partially supported by University of Pisa’s project PRA\_2020\_61, and by GNCS of INdAM}
}
\author{D.A. Bini\thanks{University of Pisa, Italy}, B. Iannazzo\thanks{University of Perugia, Italy}, B. Meini\thanks{University of Pisa, Italy}, 
J. Meng\thanks{Ocean University of China, Qingdao, Shandong, China}, L. Robol\thanks{University of Pisa, Italy}
}
\maketitle

\begin{abstract}
 A quasi-Toeplitz (QT) matrix is a semi-infinite matrix of the form
 $A=T(a)+E$ where $T(a)$ is the Toeplitz matrix with entries
 $(T(a))_{i,j}=a_{j-i}$, for $a_{j-i}\in\mathbb C$, $i,j\ge 1$, while
 $E$ is a matrix representing a compact operator in $\ell^2$.  The
 matrix $A$ is finitely representable if $a_k=0$ for $k<-m$ and for
 $k>n$, given $m,n>0$, and if $E$ has a finite number of nonzero
 entries.  The problem of numerically computing eigenpairs
 of a finitely representable QT matrix is investigated, i.e., pairs
  $(\lambda,\bm v)$ such that $A\bm v=\lambda \bm v$, with $\lambda\in\mathbb C$, $\bm
 v=(v_j)_{j\in\mathbb Z^+}$, $\bm v\ne 0$, and $\sum_{j=1}^\infty |v_j|^2<\infty$. It is shown
 that the problem is reduced to a finite nonlinear eigenvalue problem
 of the kind $ WU(\lambda)\bm \beta=0$, where $W$ is a constant matrix and
 $U$ depends on $\lambda$ and can be given in terms of either a
 Vandermonde matrix or a companion matrix.  Algorithms relying on
 Newton's method applied to the equation $\det WU(\lambda)=0$ are
 analyzed. Numerical experiments show the effectiveness of this
 approach.  The algorithms have been included in the CQT-Toolbox
 [Numer. Algorithms 81 (2019), no. 2, 741--769].
\end{abstract}

\section{Introduction}
A quasi-Toeplitz (QT) matrix $A$ is a
semi-infinite matrix that can be written as $A=T(a)+E$ where
$T(a)=(t_{i,j})_{i,j\in\mathbb Z^+}$ is Toeplitz, i.e., $t_{i,j}=a_{j-i}$ for a given
sequence $\{a_k\}_{k\in\mathbb Z}$, and $E$ is compact, that is, $E$
is the limit of a sequence of semi-infinite matrices $E_i$ of finite
rank. Here, convergence means that $\lim_{i\to\infty}\|E-E_i\|_s=0$, 
where $\|\cdot\|_s$ is the operator norm induced by the vector norm
$\|\bm v\|_s=(\sum_{i=1}^\infty |v_i|^s)^\frac1s$, for $\bm
v=(v_i)_{i\in\mathbb Z^+}$, and the value of $s\ge 1$ depends on the
specific context where the mathematical model originates.

Matrices of this kind are encountered in diverse applications related
to semi-infinite domains. For instance, the analysis of queuing
models, where buffers have infinite capacity, leads to QT matrices
where the compact correction reproduces the boundary conditions of the
model while the Toeplitz part describes the inner action of the
stochastic process. A typical paradigm in this framework is given by
random walks in the quarter plane. Some references in this regard can
be found in the books \cite{blm:book}, \cite{lr:book}, \cite{neuts},
and in the more recent papers \cite{jackson}, \cite{ozawa19},
\cite{ozawa}. Another classical and meaningful example concerns the
class of matrices that discretize boundary value problems by means of
finite differences. In this case, the Toeplitz part of the QT matrix
describes the inner action of the differential operator, while the
compact correction expresses the boundary conditions imposed on the
differential system. In this framework, it is worth citing the two
books \cite{gs2:book}, \cite{gs1:book}, that are a relevant reference on a
very close subject concerning  generalized locally Toeplitz
matrices (GLT) and their applications, where a rich literature is cited.

Computational aspects in the solution of matrix equations with QT
matrices in bidimensional random walk have been recently investigated
in \cite{bmm}, \cite{bmmr}, \cite{bmm20}, while generalizations
including probabilistic models with restarts are analyzed in
\cite{bmmr:sicomp}. Other applications of QT matrices have been
considered in \cite{mean1}, \cite{mean2}, \cite{bm:numermath},
concerning matrix functions and means, and in
\cite{jie}, \cite{leonardo} concerning Sylvester equations.  Important
sources of theoretical properties of QT matrices are given in the
books \cite{BG:book2000}, \cite{BG:book2005}, and \cite{BS:book}.  In
\cite{bmr} a suitable Matlab toolbox, the CQT-toolbox, has been
introduced for performing arithmetic operations with QT matrices
including the four arithmetic operations and the more relevant matrix
factorizations.

\subsection{Main results}
In this paper, we deal with the computation of the eigenvalues of  QT
matrices, a topic that was not covered in the CQT-Toolbox of \cite{bmr}. Namely,
we are interested in the design and analysis of algorithms for
computing the eigenvalues $\lambda$ and the corresponding eigenvectors
$\bm v$ of a given QT matrix $A$, that is, $\bm v$ is such that $A\bm v=\lambda
\bm v$ and $\bm v\in\ell^s$ where $1\le s<\infty$. Here $\ell^s$ is
the set of vectors $\bm x=(x_i)_{i\ge 1}$ such that $\|\bm
x\|_s<\infty$.  For the sake of simplicity, in the following we will
set $s=2$ and use $\|\cdot\|$ to denote the $2$-norm.  The attention
is restricted to the case where $A$ is finitely representable, i.e.,
$A=T(a)+E$, where $T(a)$ is a band Toeplitz matrix determined by a finite
number of parameters $a_{-m},\ldots,a_{n}$ for $m,n>0$, $E$ is a
matrix having infinitely many rows and columns but a finite number of
nonzero entries.  A matrix of this kind represents a bounded linear
operator from $\ell^2$ in $\ell^2$.  We associate with the matrix $A$
the Laurent polynomial $a(z)=\sum_{i=-m}^na_iz^i$.

 Recall that the spectrum of a bounded operator $A$ is the set of
 $\lambda\in\mathbb C$ such that $A-\lambda I$ is not invertible, and
 the essential spectrum is the set of $\lambda\in\mathbb C$ such that
 $A-\lambda I$ is not Fredholm.  We wish to point out that,
 not all the points of the spectrum or of the essential spectrum are
 necessarily eigenvalues of $A$. Moreover, while for a Toeplitz matrix
 $A$ the set of eigenvalues does not contain isolated points and can
 be explicitly determined by the image $a(\mathbb T)$ of the unit
 circle $\mathbb T$ through the Laurent polynomial $a(z)$ and by the
 winding number of $a(z)-\lambda$ (see \cite{BG:book2000}), for a
 general QT matrix having a nontrivial compact correction the set of
 eigenvalues may contain a continuous part and a discrete part, the
 latter is formed by a set of isolated eigenvalues. As an example, see
 Figure \ref{fig:1}.

\begin{figure}
\centering
\includegraphics[scale=0.4]{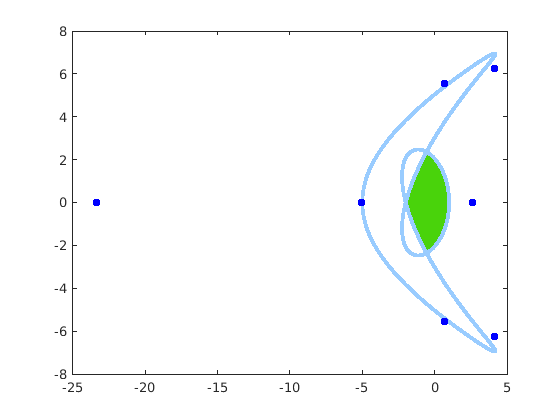}
\caption{\footnotesize Isolated eigenvalues (blue dots) and dense set
  of eigenvalues (green area) of the QT matrix associated with
  $a(z)=z^{-3}-3z^{-2}+2z^{-1}-2z+z^2+2z^3$ and with the
  correction $E=(e_{i,j})$, $e_{1,j}=-2j$, $e_{2,j}=-2(10-j)$,
  $e_{3,j}=-2$, for $j=1,\ldots,9$, $e_{i,j}=0$
  elsewhere.}\label{fig:1}
\end{figure}

We prove that any isolated eigenvalue $\lambda$ of a QT matrix $A$ is
the solution of a finite nonlinear eigenvalue problem of the form
\[
W U(\lambda)\bm \gamma=0,\quad \bm \gamma\in\mathbb C^p\setminus \{0\} ,
\]
where $W$ is a $q\times k$ constant matrix and $U(\lambda)$ is a
$k\times p$ matrix-valued function whose size $p$ and entries depend
on $\lambda$ in an implicit way. Here $k,q>0$ are integers depending
on the given matrix $A$, while $p$ is the number of zeros $\xi_j$,
$j=1,\ldots,p$ of modulus less than 1 of the Laurent polynomial
$a(z)-\lambda$. It is well-known that the value of $p$ is given by
$p=m+w$ where $w$ is the winding number of $a(z)-\lambda$. Thus, it
takes constant values on each connected component $\Omega$ of the set
$\mathbb C\setminus a(\mathbb T)$ (see Figure \ref{fig:conncomp} for
an example). Note that while $p$ depends on $\lambda$, it is locally
constant on $\mathbb C\setminus a(\mathbb T)$, and thus we will not
write explicitly the dependence on $\lambda$.

We consider two different forms of $U=(u_{i,j})$: the {\em Vandermonde
  version} and the {\em Frobenius version}. In the former version, $U$
can be chosen as the Vandermonde matrix with entries
$u_{i,j}=\xi_j^{i-1}$, $i=1,\ldots,k$, $j=1,\ldots,p$, provided that
$\xi_i\ne\xi_j$ for $i\ne j$. In the latter, $U$ is the truncation to
size $k\times p$ of the matrix $[I;G;G^2;\ldots]$ (we adopted the
Matlab notation where ``;'' separates block rows of the matrix), where
$G=F^p$ is the $p$-th power of the $p\times p$ companion (Frobenius)
matrix $F$ associated with the monic polynomial
$s(z)=\prod_{j=1}^p(z-\xi_j)=z^p+\sum_{j=0}^{p-1} s_jz^j$.

This formulation of the problem allows us to detect those components
$\Omega$ that constitute continuous sets of eigenvalues (for $q<p$),
and to design numerical algorithms for computing the isolated
eigenvalues of $A$ (for $q\ge p$) by solving the corresponding
nonlinear eigenvalue problem.  Nonlinear eigenvalue problems have
recently received much attention in the literature. Here we refer to
the survey paper \cite{gt-2017}, to the subsequent paper
\cite{feast-2018}, to the more recent works \cite{gander-2021} and
\cite{hp-2020}, and to the references there in.

Our algorithms follow the classical approach of applying Newton's
iteration, as done in \cite{gander-2021} and
\cite{gt-2017}, to the scalar equation $f(\lambda)\!=\!0$, where
$f(\lambda)\!=\!\det (WU(\lambda))$ by relying on the Jacobi identity
$f(\lambda)/f'(\lambda)\!=\!1/\hbox{trace}((WU(\lambda))^{-1}WU'(\lambda))$.  Here, the main problem is to exploit the specific
features of the function $f(\lambda)$ through the design of efficient
algorithms to compute $U(\lambda)$ and $U'(\lambda)$ in both the
Vandermonde and in the Frobenius formulation. This analysis leads to
the algorithmic study of some interesting computational problems such
as computing the winding number of $a(z)-\lambda$, or computing the
coefficients of the polynomial factor $s(z)$ having zeros of modulus
less than 1 together with their derivatives with respect to $\lambda$,
or computing $G=F^p$ and the derivative of $G^j$ for $j=1,2,\ldots$,
with respect to $\lambda$.  We will accomplish the above tasks by
relying on the combination of different computational tools such as
Graeffe's iteration \cite{graeffe}, the Wiener-Hopf factorization of
$a(z)-\lambda$ computed by means of the cyclic reduction algorithm
\cite{blm:book}, and the Barnett factorization of $F^p$
\cite{barnett}.

The algorithms based on the Vandermonde and on the Frobenius versions
require either the computation of the zeros of the Laurent polynomial
$a(z)-\lambda$ and the selection of those zeros $\xi_1,\ldots,\xi_p$
having modulus less than 1, or the computation of the coefficients of
the factor $\prod_{j=1}^p(z-\xi_j)$. In principle, the latter approach
is less prone to numerical instabilities and avoids the theoretical
difficulties encountered when there are multiple or clustered zeros.
This fact is confirmed by numerical tests and analysis.

Our procedure uses Newton's iteration as an effective tool for
refining a given approximation to an eigenvalue. In order to
numerically compute all the eigenvalues we have combined Newton's
iteration with a heuristic strategy based on choosing as starting
approximations the eigenvalues of the $N\times N$ matrix $A_N$ given
by the leading principal submatrix of $A$ of sufficiently large
size. In fact, we may show that for any $\epsilon>0$, the
$\epsilon$-pseudospectrum of $A_N$ gets closer to any isolated
eigenvalue of $A$ as $N$ gets large.

One could argue that a large value of $N$ would provide an
approximation of the isolated eigenvalues of $A$,
directly. Nevertheless, our approach requires only a rough
approximation of the isolated eigenvalues and thus a smaller value of
$N$, followed by Newton's iteration, to compute the eigenvalues with
the same accuracy.  Numerical experiments show the effectiveness of
this approach: examples are shown where in order to obtain
full-precision approximations of the eigenvalues of $A$ from the
eigenvalues of $A_N$ would require large values of $N$ (of the order
of millions), while starting Newton's iteration with the eigenvalues
of $A_N$ for moderate values of $N$ (of the order of few hundreds)
provides very accurate approximations in few steps.

\subsection{Paper organization}
The paper is organized as follows. In Section 2 we recall some
preliminary properties that are useful in the subsequent analysis.  In
particular, Section 2.1 deals with the eigenvalues of $T(a)$ while
Section 2.2 deals with the eigenvalues of $T(a)+E$. Section 3 concerns
the reduction of the original eigenvalue problem for QT operators to
the form of a nonlinear eigenvalue problem for finite matrices in the
Frobenius and in the Vandermonde versions. Section 4 concerns further
algorithmic issues. In particular, an efficient method for computing
the winding number of a Laurent polynomial is designed based on the
Graeffe iteration; the problem of computing a factor of the polynomial
$a(z)-\lambda$ together with its derivative with respect to $\lambda$
is analyzed relying on the Barnett factorization and on the solution
of a linear system associated with a resultant matrix; morever, in the
same section we prove the regularity of the function
$\det(WU(\lambda))$ to which Newton's iteration is applied. In Section
5 we investigate on the relationships between the isolated eigenvalues
of $A$ and the eigenvalues of $A_N$ when $N$ gets large.  The results
of some numerical experiments are reported in Section 6. Finally,
Section 7 draws the conclusions and describes some open problems.

The algorithms, implemented in Matlab, have been added to the
CQT-Toolbox of \cite{bmr}. The main functions are \verb+eig_single+
and \verb+eig_all+. The former computes a single eigenvalue of a QT
matrix starting from a given approximation, and, optionally, an
arbitrary number of components of the corresponding eigenvector, the
latter provides the computation of all the eigenvalues. Other related
functions integrate the package.  More information, together with the
description of other auxiliary functions and optional parameters can
be found at \url{https://numpi.github.io/cqt-toolbox} while the
software can be downloaded at
\url{https://github.com/numpi/cqt-toolbox}.

\section{Preliminaries}
Let $a(z)=\sum_{i=-m}^n a_iz^i$ be a Laurent polynomial where $a_i\in\mathbb C$ for $i=-m,\ldots,n$, and
$a_{-m},a_n\ne 0$. Define $T(a)=(t_{i,j})_{i,j=1,2,\ldots}$,
$t_{i,j}=a_{j-i}$, the Toeplitz matrix associated with
$a(z)$. Given a semi-infinite matrix $E=(e_{i,j})_{i,j=1,2,\ldots}$,
such that $e_{i,j}=0$ for $i>k_1$, or $j>k_2$, the matrix $A = T(a)+E$
represents a bounded linear operator from the set
$\ell^2=\{(v_i)_{i\in\mathbb Z^+}\,:\, v_i\in\mathbb C,
~\sum_{i=1}^\infty |v_i|^2<\infty\}$ to itself. Denote by $\mathcal
B(\ell^2)$ the set of bounded linear operators from $\ell^2$ to itself
and by $\mathbb T$ the unit circle in the complex plane.

Recall that $A$ is invertible if there exists $B\in\mathcal B(\ell^2)$
such that $AB=BA=I$, where $I$ is the identity on $\mathcal
B(\ell^2)$. Moreover, $A$ is Fredholm if there exists $B\in\mathcal
B(\ell^2)$ such that $AB-I$ and $BA-I$ are compact, i.e., $A$ is
invertible modulo compact operators.  Recall also that for
$A\in\mathcal B(\ell^2)$ the spectrum of $A$ is defined as
\[
\hbox{sp}(A)=\{\lambda\in\mathbb C\,:\, A-\lambda I\hbox{ is not invertible}\}
\]
while the essential spectrum is defined as
\[
\hbox{sp}_{\rm ess}(A)=\{\lambda\in\mathbb C\,:\, A-\lambda I\hbox{ is not Fredholm}\},
\]
so that $\hbox{sp}_{\rm ess}(A)\subset \hbox{sp}(A)$.

It is well known that for a Laurent polynomial $a(z)$, $T(a)$ is
invertible in $\ell^s$ if and only if $a(z)\ne 0$ for $z\in\mathbb T$
and $\hbox{wind}(a)=0$ (see \cite[Corollary 1.11]{BG:book2005}), where
$\hbox{wind}(a)$ is the winding number of the curve $a(\mathbb T)$.

In the case where $a(z)$ is a Laurent polynomial, we may write
\begin{equation}\label{eq:wind}
\hbox{wind}(a)=\frac1{2\pi}\int_0^{2\pi}e^{\cu t}\frac{a'(e^{\cu  t})}{a(e^{\cu t})}dt,
\end{equation}
where $a'(z)=\sum_{j=-m}^n ja_{j}z^{j-1}$ is the first derivative of
$a(z)$. Notice that $\hbox{wind}(a-\lambda)$ is constant for $\lambda$
in each connected component $\Omega$ of the set $\mathbb C\setminus
a(\mathbb T)$.  Consequently, we have (see \cite[Corollary 1.12]{BG:book2005})
\begin{equation}\label{eq:spect}
\hbox{sp}(T(a))=a(\mathbb T)\cup\{\lambda\in\mathbb C\setminus a(\mathbb T)\,:\,\hbox{wind}(a-\lambda)\ne 0\},
\end{equation}
  moreover, 
\begin{equation}\label{eq:spess}
\hbox{sp}_{\rm ess}(T(a))=a(\mathbb T).
\end{equation}
We say that $(\lambda,\bm v)$ is an eigenpair (eigenvalue, eigenvector) if $A\bm v=\lambda \bm v$ and $\bm v\in\ell^2$.

\subsection{Eigenvalues of $T(a)$}
The following results from \cite{BG:book2005} characterize the
eigenpairs of the Toeplitz operator $T(a)$. In this statement and
throughout the paper, we used a slightly different notation with
respect to \cite{BG:book2005}. Namely, we denote the entries of $T(a)$
as $(T(a))_{i,j}=a_{j-i}$, while in the classical literature they are
denoted as $(T(a))_{i,j}=a_{i-j}$. The reason is that this notation is
more suited to fit the context of Markov chains and queueing models
where these matrices play an important role.

\begin{lemma}\label{eig:onT}\cite[Proposition 1.20]{BG:book2005}
Let $1\leq s\leq \infty$. For a Laurent polynomial $a(z)$, a point
$\lambda\notin a(\mathbb{T})$ is an eigenvalue of $T(a)$ as an
operator on $\ell^s$ if and only if $r:={\rm wind}(a-\lambda)>0$.
Moreover, the kernel of $T(a)-\lambda I$ has dimension $r$ and if $\bm
v\in\hbox{\rm ker}(T(a)-\lambda I)$ then $\bm v$ is exponentially
decaying.
\end{lemma}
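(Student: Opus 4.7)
The plan is to reduce the eigenvalue equation $(T(a)-\lambda I)\bm v=0$ to a scalar linear recurrence with Dirichlet-type boundary conditions, and then to count its $\ell^s$ solutions via the root structure of the Laurent polynomial $b(z):=a(z)-\lambda$.

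First I would expand the $i$-th equation as $\sum_{k=\max(-m,\,1-i)}^{n} b_k\, v_{i+k}=0$ for $i\ge 1$, and introduce the auxiliary values $v_0=v_{-1}=\cdots=v_{1-m}:=0$. With this convention, all equations (including those with $1\le i\le m$) collapse into the uniform linear recurrence $\sum_{k=-m}^{n} b_k\, v_{i+k}=0$, $i\ge 1$, of order $m+n$, subject to the $m$ homogeneous boundary constraints $v_{1-m}=\cdots=v_0=0$.

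Next I would analyse this recurrence through its characteristic polynomial $z^m b(z)$, which has degree $m+n$ and nonzero constant term $b_{-m}=a_{-m}$; hence its roots $\xi_1,\ldots,\xi_{m+n}$ are all nonzero, and since $\lambda\notin a(\mathbb T)$ none of them lies on $\mathbb T$. Order them so that $|\xi_j|<1$ for $j=1,\ldots,p$ and $|\xi_j|>1$ otherwise. The argument principle applied to $b(z)=z^{-m}(z^m b(z))$ then yields $r=\hbox{wind}(b)=p-m$, the difference between the number of zeros and of poles of $b$ inside the open unit disk.

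I would then observe that an $\ell^s$ solution of the recurrence cannot contain any mode with $|\xi_j|>1$ (such a mode is unbounded, hence excluded for every $s\in[1,\infty]$), so $(v_i)_{i\ge 1-m}$ must be a linear combination of the $p=m+r$ generalized modes $i^\ell \xi_j^i$ with $|\xi_j|<1$. Imposing the $m$ boundary conditions defines a linear map $\mathbb C^p\to\mathbb C^m$ whose matrix is, up to nonzero column scaling, a (possibly confluent) Vandermonde in the distinct values among $\xi_1,\ldots,\xi_p$; this map has rank $\min(m,p)$, so the $\ell^s$ kernel of $T(a)-\lambda I$ has dimension $p-\min(m,p)=\max(r,0)$. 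Consequently the kernel is nontrivial precisely when $r>0$, in which case it has dimension $r$, and every element of it, being a combination of modes $i^\ell \xi_j^i$ with $|\xi_j|<1$, decays exponentially.

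The main obstacle I expect is establishing the full rank of the constraint matrix in the confluent-root case, where repeated zeros of $b$ inside $\mathbb T$ force the use of generalized modes $i^\ell \xi_j^i$ and a Hermite--Vandermonde matrix rather than a plain Vandermonde. I would handle this by invoking the classical nonvanishing of confluent Vandermonde minors, whose distinct interpolation nodes are the distinct elements of $\{\xi_1,\ldots,\xi_p\}$; alternatively, one may note that $r$ is constant on each connected component $\Omega$ of $\mathbb C\setminus a(\mathbb T)$ and reduce by continuity to the simple-root case, which occurs on an open dense subset of $\Omega$.
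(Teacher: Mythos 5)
The paper does not prove this lemma; it quotes it as \cite[Proposition~1.20]{BG:book2005}, so there is no in-paper argument to compare against. Your recurrence-based derivation is a correct, self-contained proof of the quoted statement, and it captures the same combinatorics as the standard reference: under the paper's convention $(T(a))_{i,j}=a_{j-i}$, the $i$th equation of $(T(a)-\lambda I)\bm v=0$ is exactly $\sum_{k\ge 1-i} b_k v_{i+k}=0$ with $b(z)=a(z)-\lambda$, and padding with the ghost zeros $v_{1-m}=\cdots=v_0=0$ turns the whole system into the uniform order-$(m+n)$ recurrence whose characteristic polynomial is $z^m b(z)$. The $\ell^s$ solutions for every $1\le s\le\infty$ are then spanned by the $p$ generalized modes $i^\ell\xi_j^i$ with $|\xi_j|<1$, $p=m+\operatorname{wind}(a-\lambda)$ by the argument principle (matching Proposition~\ref{prop:1}), and the $m$ boundary constraints form a (confluent) Vandermonde block in the distinct $\xi_j$'s of full column rank $\min(m,p)$. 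This gives $\dim\ker(T(a)-\lambda I)=p-\min(m,p)=\max(r,0)$, whence the equivalence with $r>0$, the dimension count $r$, and the exponential decay all follow at once.

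Two remarks. First, the step ``an $\ell^s$ solution cannot contain a mode with $|\xi_j|>1$'' deserves a sentence: a nonzero combination $\sum_j q_j(i)\xi_j^i$ over modes with $|\xi_j|>1$ grows like $i^d R^i$ where $R=\max|\xi_j|$ and $d$ is the top degree among the $q_j$ at modulus $R$, because after normalizing by $i^dR^i$ the leading part is a nonzero almost-periodic exponential sum that does not tend to zero; so no cancellation can make it bounded. Second, your continuity fallback for the confluent case is not airtight as stated: $\dim\ker(T(a)-\lambda I)$ is only upper semicontinuous in $\lambda$, so approximating a multiple-root $\lambda_0$ by nearby simple-root $\lambda$'s yields $\dim\ker\ge r$ but not equality without invoking Fredholm-index constancy and Coburn's lemma, which would essentially re-import the machinery your elementary argument was designed to avoid. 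Stick with the confluent-Vandermonde full-rank argument; it is standard and closes the gap cleanly.
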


If $\lambda\in a(\mathbb T)$ a similar result can be given. Let
$\tau_1,\ldots,\tau_q$ be the distinct zeros of $a(z)-\lambda$ of
modulus 1 and multiplicity $\alpha_1,\ldots,\alpha_q$,
respectively. Define
\begin{equation}\label{eq:c}
    c(z)= (a(z)-\lambda)/\prod_{j=1}^q \Bigl(1-\frac z{\tau_j}\Bigr)^{\alpha_j},
\end{equation}
 so that $c(z)$ is a Laurent polynomial having no zero on $\mathbb T$. Then we have the following.

\begin{lemma}\label{eig:onT1}\cite[Proposition 1.22]{BG:book2005}
Let $1\leq s\leq \infty$. For a Laurent polynomial $a(z)$, a point
$\lambda\in a(\mathbb{T})$ is an eigenvalue of $T(a)$ as an operator
on $\ell^s$ if and only if $r:={\rm wind}(c)>0$.  Moreover, the kernel
of $T(a)-\lambda I$ has dimension $r$ and if $\bm v\in\hbox{\rm
  ker}(T(a)-\lambda I)$ then $\bm v$ is exponentially decaying.
\end{lemma}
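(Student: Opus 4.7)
The plan is to reduce to Lemma \ref{eig:onT} via the factorization $a(z)-\lambda = d(z)\,c(z)$, where $d(z)=\prod_{j=1}^q(1-z/\tau_j)^{\alpha_j}$ carries precisely the zeros of $a-\lambda$ that lie on $\mathbb{T}$ (with their multiplicities) and $c(z)$ is the Laurent polynomial defined in \eqref{eq:c}, which by construction has no zeros on $\mathbb{T}$. Since $d$ is an ordinary polynomial in $z$ (only nonnegative powers), the indexing convention $(T(a))_{i,j}=a_{j-i}$ yields the clean operator identity $T(a-\lambda)=T(d)\,T(c)$ on $\ell^s$; a direct index computation shows that the would-be Hankel correction vanishes precisely because multiplication by $d$ preserves the analytic part.

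The central step is then to show that $T(d)$ is injective on $\ell^s$ for $1\le s<\infty$. The matrix $T(d)$ is upper triangular with diagonal $d_0=d(0)=1$, so $T(d)\bm x=0$ reads $x_i=-\sum_{k=1}^{\deg d} d_k\,x_{i+k}$, and the general solution of this backward recurrence is a combination of the sequences $i^\ell \tau_j^i$ for $\ell=0,\ldots,\alpha_j-1$ and $j=1,\ldots,q$, none of which lies in $\ell^s$ because $|\tau_j|=1$. Consequently, for $\bm v\in\ell^s$ the equation $T(a-\lambda)\bm v=0$ is equivalent to $T(c)\bm v=0$, and hence $\ker(T(a)-\lambda I)\cap\ell^s=\ker T(c)\cap\ell^s$.

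I would conclude by applying Lemma \ref{eig:onT} to the Laurent polynomial $c$, which has no zeros on $\mathbb{T}$: its $\ell^s$-kernel has dimension $\max(r,0)$ with $r=\mathrm{wind}(c)$, and every element decays exponentially. Both properties transfer verbatim to $\ker(T(a)-\lambda I)$, giving the stated equivalence and multiplicity count. The main obstacle will be verifying the factorization $T(a-\lambda)=T(d)T(c)$ under the paper's flipped indexing convention, where multiplicativity of the Toeplitz correspondence requires the first factor to be analytic and one must track index ranges carefully; once this is secured, the rest is a direct application of the previous lemma. For the edge case $s=\infty$ the sequences $\tau_j^i$ are bounded, so the injectivity step as stated would fail and a separate refinement would be needed at that boundary case.
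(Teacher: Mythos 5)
The paper cites this lemma from \cite[Proposition 1.22]{BG:book2005} without providing a proof, so there is no internal argument to compare against; your write-up must be assessed on its own merits. The reduction you propose is the standard one and is correct for $1\le s<\infty$. In particular the operator identity $T(a-\lambda)=T(d)T(c)$ is correctly justified under the paper's indexing $(T(a))_{i,j}=a_{j-i}$: a direct index computation shows $T(d)T(c)=T(dc)$ whenever $d$ has no negative Fourier coefficients, and $d(z)=\prod_j(1-z/\tau_j)^{\alpha_j}$ indeed has only nonnegative powers. Two small points worth tightening in the injectivity step: you should argue that no nontrivial \emph{linear combination} of the sequences $i^\ell\tau_j^i$ lies in $\ell^s$, not just that each basis element fails individually (this follows from the standard fact that a nontrivial solution of a constant-coefficient linear recurrence whose characteristic roots all lie on the unit circle cannot tend to zero, e.g.\ by a Ces\`aro-mean argument), and you should note explicitly that $c(z)\ne 0$ on $\mathbb T$, i.e.\ $0\notin c(\mathbb T)$, which is exactly the hypothesis needed to invoke Lemma~\ref{eig:onT} for $c$ at the point $0$.

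Your reservation about $s=\infty$ is not a defect of the method but a genuine issue with the statement as transcribed. With the paper's indexing take $a(z)=1-z$ and $\lambda=0$: then $\tau_1=1$, $d(z)=1-z$, $c\equiv 1$, $\mathrm{wind}(c)=0$, yet $(T(a)\bm v)_i=v_i-v_{i+1}$, so $(1,1,1,\ldots)\in\ell^\infty$ is a nonzero element of $\ker T(a)$ that does not decay. Hence both the ``only if'' direction and the exponential-decay claim fail on $\ell^\infty$ under this convention, and your proof should be read as establishing the lemma for $1\le s<\infty$ --- which is in any case the range the paper actually uses ($s=2$).
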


Observe that according to the above lemmas, the eigenvalues of $T(a)$ belong to the set $\hbox{sp}(T(a))$, that, in turn, can be explicitly described by means of \eqref{eq:spect}.

Let $\Omega$ be a connected component of the set $\mathbb C\setminus
a(\mathbb T)$. The function $\hbox{wind}(a-\lambda)$ is constant on
$\Omega$, and this means that if the winding number is $r>0$ then all
the values $\lambda\in\Omega$ are eigenvalues of $T(a)$ of (geometric)
multiplicity $r$, while if $r\le 0$ then no $\lambda\in\Omega$ is
eigenvalue of $T(a)$. We recall Proposition 1.25 from
\cite{BG:book2005}.

\begin{figure}
\begin{center}
\includegraphics[width=6cm]{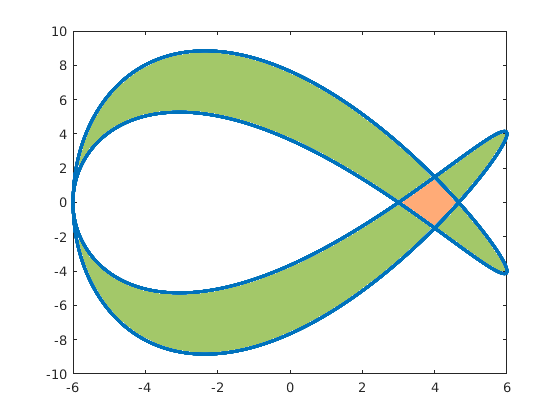}
\end{center}\caption{\footnotesize Connected components of
  $\mathbb C\setminus a(\mathbb T)$ for the Laurent polynomial
  $a(z)=3z^{-3}-2z^{-2}+z^{-1}-z-4z^2-3z^3$. In green and orange the
  components with winding numbers 1 and 2, respectively. In white the
  components with winding number 0.}\label{fig:conncomp}
\end{figure}

\begin{lemma}
If $\lambda\in a(\mathbb T)$ is in the boundary of $\Omega$, and $\xi
= \hbox{\rm wind}(a-\mu)$ for $\mu\in\Omega$, then $\xi \ge \hbox{\rm
  wind}(c)$, where $c(z)$ is defined in \eqref{eq:c}.
\end{lemma}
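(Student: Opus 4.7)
The plan is to reduce the winding number inequality to a root-counting inequality via the argument principle, and then exploit the continuity of the roots of $a(z)-\mu$ as $\mu$ varies. Since both $a(z)-\mu$ and $c(z)$ have a pole of order $m$ at $z=0$ (the denominator in \eqref{eq:c} is a polynomial with no zero at $0$), and since both are nonvanishing on $\mathbb{T}$ when $\mu\in\Omega$ (by construction of $c$ and by $\mu\notin a(\mathbb{T})$), the argument principle gives
\[
\mathrm{wind}(a-\mu)=Z_\mu-m,\qquad \mathrm{wind}(c)=Z_c-m,
\]
where $Z_\mu$ and $Z_c$ count, with multiplicity, the zeros of $a(z)-\mu$ and $c(z)$ in $\{|z|<1\}$. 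Thus the claim $\xi\ge\mathrm{wind}(c)$ is equivalent to $Z_\mu\ge Z_c$ for every $\mu\in\Omega$.

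Next, I would fix a $\mu\in\Omega$ sufficiently close to $\lambda$, which is possible because $\lambda$ lies in the boundary of $\Omega$; since $\mathrm{wind}(a-\mu)$ is constant on $\Omega$, verifying the inequality for such a $\mu$ yields it for the whole component. Let $Z_\lambda^{\mathrm{in}}$ denote the number of zeros of $a(z)-\lambda$ in the open unit disk (with multiplicity). By continuity of the roots of a polynomial in its coefficients (applied to $z^m(a(z)-\mu)$), for $\mu$ close enough to $\lambda$ the zeros of $a(z)-\mu$ split into three groups: (i) zeros close to the interior zeros of $a(z)-\lambda$, contributing exactly $Z_\lambda^{\mathrm{in}}$ to $Z_\mu$; (ii) for each $\tau_j$, exactly $\alpha_j$ zeros clustered near $\tau_j$, of which some number $\beta_j\in\{0,1,\dots,\alpha_j\}$ land strictly inside $\mathbb{T}$ (none on $\mathbb{T}$, since $\mu\in\Omega$ implies $a(z)-\mu$ has no root on $\mathbb{T}$); (iii) zeros near the exterior zeros of $a(z)-\lambda$, which do not contribute to $Z_\mu$. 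Hence $Z_\mu=Z_\lambda^{\mathrm{in}}+\sum_{j=1}^q\beta_j$. On the other hand, $c(z)$ differs from $a(z)-\lambda$ only by cancellation of the zeros on $\mathbb{T}$, so $c$ has the same zeros as $a-\lambda$ in $\{|z|<1\}$ with the same multiplicities, giving $Z_c=Z_\lambda^{\mathrm{in}}$.

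Combining these yields $\mathrm{wind}(a-\mu)-\mathrm{wind}(c)=Z_\mu-Z_c=\sum_{j=1}^q\beta_j\ge 0$, which is the desired inequality. The only delicate point is justifying the continuity-of-roots argument near the boundary zeros $\tau_j$ and checking that no perturbed root can remain on $\mathbb{T}$ for $\mu\in\Omega$; the latter is immediate because $\mu\in\Omega\subset\mathbb{C}\setminus a(\mathbb{T})$ means $a(z)-\mu$ has no zero on $\mathbb{T}$ at all, while the former follows from standard facts about the continuous dependence of polynomial roots on the coefficients, applied after clearing the $z^{-m}$ factor.
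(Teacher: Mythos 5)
Your argument is correct. The paper itself offers no proof of this lemma (it simply recalls Proposition 1.25 of B\"ottcher--Grudsky), so there is nothing to compare against directly, but your reduction of the winding-number inequality to a zero-count via the argument principle (using the common pole of order $m$ at $z=0$), followed by continuity of the roots of $z^m(a(z)-\mu)$ in $\mu$ near the boundary point $\lambda$, is exactly the natural route: the interior zeros and exterior zeros persist, the $\alpha_j$ zeros clustered at each $\tau_j$ split into $\beta_j$ interior and $\alpha_j-\beta_j$ exterior ones with none on $\mathbb{T}$, and $\xi - \mathrm{wind}(c) = \sum_j \beta_j \ge 0$; constancy of $\mathrm{wind}(a-\mu)$ on $\Omega$ then extends the conclusion from $\mu$ near $\lambda$ to all of $\Omega$. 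The proof is complete and sound.
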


From the above results it follows that (compare with Corollary 1.26 in
\cite{BG:book2005}) if $\lambda$ lies on the boundary of $\Omega$ such
that $\hbox{wind}(a-\mu)\le 0$ for $\mu\in\Omega$ then $\lambda$
cannot be an eigenvalue of $T(a)$. That is, the eigenvalues of $T(a)$
belong necessarily to those components $\Omega$ for which
$\hbox{wind}(a-\lambda)>0$ and to their boundaries. Therefore $T(a)$
cannot have isolated eigenvalues.

\subsection{Eigenvalues of $T(a)+E$}
From the definition of spectrum and of essential spectrum it follows that
\[
\hbox{sp}_{\rm ess}(T(a))=\hbox{sp}_{\rm ess}(T(a)+E)\subset \hbox{sp}(T(a)+E)
\]
for any compact operator $E$. In fact, to prove the equality, if
$A=T(a)+E$ is a QT matrix, then $A-\lambda I$ is not Fredholm iff
$B(A-\lambda I)-I$ and $(A-\lambda I)B-I$ are not compact for any
bounded operator $B$. That is, iff $B(T(a)-\lambda I)-I+BE$ and
$(T(a)-\lambda I)B-I+EB$ are not compact. This is equivalent to say
that $B(T(a)-\lambda I)-I$ and $(T(a)-\lambda I)B-I$ are not compact,
i.e., $T(a)-\lambda I$ is not Fredholm.

Another interesting property is given by the following.

\begin{proposition}
If $A=T(a)+E$ is a QT matrix, then ${\rm sp}(T(a))\subset {\rm sp}(A)$. 	
\end{proposition}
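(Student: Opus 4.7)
The plan is to split the inclusion along the decomposition \eqref{eq:spect} of $\hbox{sp}(T(a))$ into $a(\mathbb T)$ and $\{\lambda\in\mathbb C\setminus a(\mathbb T):\hbox{wind}(a-\lambda)\ne 0\}$, and treat the two pieces separately. For $\lambda\in a(\mathbb T)=\hbox{sp}_{\rm ess}(T(a))$, I would invoke the chain $\hbox{sp}_{\rm ess}(T(a))=\hbox{sp}_{\rm ess}(A)\subset\hbox{sp}(A)$ derived in the paragraph immediately preceding the proposition, which yields $\lambda\in\hbox{sp}(A)$ with no further work.

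For the remaining case $\lambda\in\mathbb C\setminus a(\mathbb T)$ with $w:=\hbox{wind}(a-\lambda)\ne 0$, the point $\lambda$ is not in the essential spectrum of $T(a)$, so $T(a)-\lambda I$ is Fredholm. By the classical Toeplitz theory (see \cite{BG:book2005,BS:book}), its Fredholm index equals $w$ up to sign coming from the $j-i$ vs.\ $i-j$ convention of Section~2. When $w>0$, Lemma \ref{eig:onT} already produces a kernel of dimension $w>0$, so the index is nonzero; when $w<0$, the analogous statement for the cokernel (obtained by applying the same lemma to the adjoint symbol, which has opposite winding number) again forces the index to be nonzero. Since $E$ is compact and the Fredholm index is stable under compact perturbations, the operator $A-\lambda I=(T(a)-\lambda I)+E$ is Fredholm with the same nonzero index. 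Any invertible operator has Fredholm index $0$, so $A-\lambda I$ cannot be invertible, giving $\lambda\in\hbox{sp}(A)$.

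The main obstacle is not really a calculation but the need to cite two facts from abstract Fredholm theory that are not spelled out in the excerpt: the identification of the Fredholm index of a Toeplitz operator with $\pm\hbox{wind}(a-\lambda)$, and the invariance of the index under compact perturbations. Both are standard and can be attributed to the monographs already referenced. The only case not directly covered by the lemmas in the excerpt is $w<0$, which is precisely why the Fredholm-index argument (or an equivalent adjoint reduction) is essential rather than a direct exhibition of an eigenvector of $A$.
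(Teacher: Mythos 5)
Your proof is correct and uses essentially the same ingredients as the paper's: the equality of essential spectra under compact perturbation, and the Fredholm-index argument (Toeplitz index equals $\pm$winding number, stability of the index under compact perturbations, and index zero for invertible operators). The paper packages this as the contrapositive, via Lemma~\ref{lem:invertibility} (invertibility of $A$ forces invertibility of $T(a)$) proved by contradiction, rather than your direct case split along \eqref{eq:spect}, but the mathematical content is the same.
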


The above result is an immediate consequence of the following

\begin{lemma}\label{lem:invertibility}
  If the QT matrix $A=T(a)+E$ is invertible on
  $\ell^s \ (1\leq s\leq \infty)$ , then $T(a)$ is also invertible on $\ell^s$.
\end{lemma}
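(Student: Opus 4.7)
The plan is to combine the essential-spectrum equality just established with the standard Fredholm-index formula for banded Toeplitz operators.

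First I would argue that $a(z)\ne 0$ on $\mathbb T$. Since $A$ is invertible, $0\notin\mathrm{sp}(A)$, hence $0\notin\mathrm{sp}_{\rm ess}(A)$. By the preceding argument, $\mathrm{sp}_{\rm ess}(A)=\mathrm{sp}_{\rm ess}(T(a))=a(\mathbb T)$, so $a(z)\ne 0$ for every $z\in\mathbb T$. In particular, $T(a)-0\cdot I=T(a)$ is Fredholm on $\ell^s$.

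Next I would handle the winding number. The classical Coburn/Gohberg theorem (which is essentially the content of Corollary 1.11 in \cite{BG:book2005}, used implicitly just before the statement) says that for a Laurent polynomial $a$ with $a(\mathbb T)\not\ni 0$, the Toeplitz operator $T(a)$ on $\ell^s$ is Fredholm with index $-\mathrm{wind}(a)$. Because $E$ is compact, $A=T(a)+E$ is also Fredholm with the same index, namely $-\mathrm{wind}(a)$. Since $A$ is invertible, its index is $0$, so $\mathrm{wind}(a)=0$.

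Combining the two conclusions — $a(z)\ne 0$ on $\mathbb T$ and $\mathrm{wind}(a)=0$ — and invoking the aforementioned Corollary 1.11 once more, $T(a)$ is invertible on $\ell^s$. The only conceptual obstacle is the invariance of the Fredholm index under compact perturbations, which is a standard fact and does not require any structure specific to QT matrices. Everything else reduces to bookkeeping with results already stated earlier in the excerpt.
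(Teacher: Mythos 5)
Your argument is correct and follows essentially the same route as the paper's own proof: deduce $a(z)\ne 0$ on $\mathbb T$ from $0\notin\mathrm{sp}_{\rm ess}(A)=a(\mathbb T)$, then use the Fredholm index formula for $T(a)$ together with invariance of the index under the compact perturbation $E$ to conclude $\mathrm{wind}(a)=0$. The only cosmetic differences are that the paper phrases the winding-number step as a proof by contradiction and cites Schechter and B\"ottcher--Grudsky for the index identity, and note that with the paper's convention $(T(a))_{i,j}=a_{j-i}$ the sign of the index is flipped relative to the one you quote, which is immaterial here.
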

\begin{proof}
  Since $A=T(a)+E$ is invertible, then $0\notin {\rm sp}(A)$. This
  implies $0\notin {\rm sp_{ess}}(A)={\rm
    sp_{ess}}(T(a))=a(\mathbb{T})$ so that $a(z)\ne 0$ for all $z\in
  \mathbb{T}$. To show $T(a)$ is invertible, it is sufficient to show
  that the winding number of $a(z)$ is 0, that is,
  $\hbox{wind}(a)=0$. To this end, suppose wind$(a)=m$ and $m\ne 0$,
  then $A$ is a Fredholm operator and it follows from \cite[Theorem
    2.8]{Schechter} and \cite[Theorem 1.9]{BG:book2005} that the index
  of $A$ is ${\rm Ind}\ A={\rm Ind}\ T(a)=m\ne 0$.  On the other hand,
  since $A$ is invertible, it follows that $ {\rm dim \ Ker}\ A ={\rm
    dim\ Coker} \ A=0, $ where ${\rm dim\ Ker} \ A$ is the dimension
  of the kernel of $A$ and ${\rm dim\ Coker}\ A$ is the dimension of
  the cokernel of $A$. It follows from \cite[page 9]{BG:book2005} that
  the index of A is ${\rm Ind}\ A={\rm dim\ Ker}\ A- {\rm
    dim\ Coker}\ A=0$, from which we get a contradiction. Hence, we
  must have wind$(a)=0$.
\end{proof}

Observe that, in general, $\lambda\in\hbox{sp}(T(a)+E)$ does not imply
$\lambda\in\hbox{sp}(T(a))$, as the following example shows.  Denote
by $\hbox{trid}(\alpha,\beta,\gamma)$ the tridiagonal Toeplitz matrix
associated with the Laurent polynomial $\alpha z^{-1}+\beta+\gamma z$.
Let $T(a)=\hbox{trid}(-2,5,-2)$, so that $T(a)=UU^T$,
$U=\hbox{trid}(0,2,-1)$. Set $A=T(a)-4\bm e_1 \bm e_1^T$, where
$\bm e_1=[1,0,\ldots]^T$, so that
$A=U\hbox{diag}(0,1,1,\ldots)U^T$. Then $0\in\hbox{sp}(A)$
since $A$ is not invertible (but it is Fredholm), while $0\not
\in \hbox{sp}(T(a))$ since $T(a)$ is invertible being $U$ and $U^T$
invertible operators.  That is, adding a compact correction $E$ to
$T(a)$ there may be eigenvalues of $A=T(a)+E$ not belonging to
$\hbox{sp}(T(a))$.

The following two results are useful for our analysis.

\begin{proposition}\label{prop:1}
   Let $\lambda\notin a(\mathbb T)$ and $w={\rm
     wind}(a-\lambda)$. Then the Laurent polynomial $a(z)-\lambda$ has
   $p=m+w$ zeros of modulus less than 1.
\end{proposition}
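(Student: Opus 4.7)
The plan is to apply the argument principle (for meromorphic functions) to the function $f(z)=a(z)-\lambda$ on the unit disk, and match the resulting zero count against the integral formula \eqref{eq:wind} for the winding number.

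First I would note that, as a function on $\mathbb{C}\setminus\{0\}$, the Laurent polynomial $a(z)-\lambda$ is meromorphic on all of $\mathbb{C}$ with a single pole, located at $z=0$, of order exactly $m$ (because $a_{-m}\neq 0$). Equivalently, $b(z):=z^m(a(z)-\lambda)$ is an ordinary polynomial of degree $m+n$ with $b(0)=a_{-m}\neq 0$, so that the zeros of $a(z)-\lambda$ in $\mathbb{C}\setminus\{0\}$ are exactly the zeros of $b(z)$, and in particular there are finitely many of them, none of which lies on $\mathbb{T}$ by the assumption $\lambda\notin a(\mathbb{T})$.

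Next I would rewrite the formula \eqref{eq:wind} applied to $a-\lambda$ as a contour integral. Parametrising $z=e^{\cu t}$, $dz=\cu z\,dt$, the identity
\[
\hbox{wind}(a-\lambda)=\frac{1}{2\pi}\int_0^{2\pi}e^{\cu t}\frac{a'(e^{\cu t})}{a(e^{\cu t})-\lambda}\,dt
=\frac{1}{2\pi\cu}\oint_{\mathbb T}\frac{(a-\lambda)'(z)}{(a-\lambda)(z)}\,dz
\]
identifies $w$ with the standard logarithmic derivative integral of $f=a-\lambda$ along $\mathbb{T}$. The argument principle for meromorphic functions then gives $w=Z-P$, where $Z$ and $P$ count, with multiplicity, the zeros and poles of $f$ inside the open unit disk.

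Finally I would compute the two counts. As observed in the first step, the only pole of $f$ inside $\mathbb{T}$ is at $z=0$, of order $m$, so $P=m$; and $Z$ is by definition the number $p$ of zeros of $a(z)-\lambda$ of modulus less than $1$. Thus $w=p-m$, i.e., $p=m+w$, as claimed. The argument is essentially a one-line application of the argument principle; the only care needed is to keep track of the pole at the origin contributed by the negative-index part of the Laurent polynomial, which is why no step here is really an obstacle.
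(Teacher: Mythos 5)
Your proof is correct and uses essentially the same argument as the paper: both rewrite the winding number formula \eqref{eq:wind} as the logarithmic-derivative integral of $a(z)-\lambda$ around $\mathbb{T}$, apply the argument principle, and account for the single pole of order $m$ at $z=0$ to get $w=p-m$. Your version is just slightly more explicit about the contour-integral change of variables and about why no zeros lie on $\mathbb{T}$.
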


\begin{proof}
  Since $a(e^{\cu t})' = \cu e^{\cu t}a'(e^{\cu t})$, from
  \eqref{eq:wind} we get $\frac{1}{2\pi \mathfrak
    i}\int_0^{2\pi}\frac{a(e^{\mathfrak i t})'}{a(e^{\mathfrak i
      t})-\lambda}dt=w$, which implies that the number $p$ of zeros
  and the number $\hat m$ of poles of $a(z)-\lambda$ in the open unit
  disk are such that $p-\hat m=w$. Since $\hat m=m$, it follows
  $p=m+w$.
\end{proof}

A similar result holds for $\lambda \in a(\mathbb T)$.

\begin{proposition}\label{prop:2}
	Let $\lambda\in a(\mathbb{T})$ and suppose that $a(z)-\lambda$
        has $q$ zeros $\tau_1,\ldots, \tau_q$ of modulus 1 with
        multiplicities $\alpha_1,\ldots, \alpha_q$, let $c(z)$ be the
        Laurent polynomial in \eqref{eq:c}.  Then, $a(z)-\lambda$ has
        $p=m+w-(\alpha_1+\ldots+\alpha_q)$ zeros of modulus less than
        1, where $w={\rm wind}(c)$.
\end{proposition}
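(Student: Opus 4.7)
The plan is to reduce to Proposition~\ref{prop:1} by shifting attention from $a(z)-\lambda$ to the factor $c(z)$ defined in \eqref{eq:c}, which is engineered precisely so that the contour integral \eqref{eq:wind} is well defined on $\mathbb T$. First I would verify the analytic structure of $c$: the denominator $\pi(z):=\prod_{j=1}^{q}(1-z/\tau_j)^{\alpha_j}$ is an ordinary polynomial of degree $\alpha:=\alpha_1+\cdots+\alpha_q$ with $\pi(0)=1$, whose zeros lie exactly on $\mathbb T$ with the same multiplicities as the on-circle zeros of $a(z)-\lambda$. Hence $c(z)$ is itself a Laurent polynomial of the form $\sum_{i=-m}^{n-\alpha}c_iz^i$, inheriting from $a-\lambda$ the same pole of order $m$ at the origin, having no zeros on $\mathbb T$, and with zeros strictly inside the unit disk coinciding, with multiplicity, with those of $a(z)-\lambda$ strictly inside the unit disk.

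Next I would mimic the proof of Proposition~\ref{prop:1} with $c$ in place of $a-\lambda$: since $c$ is nonzero on $\mathbb T$, the integral \eqref{eq:wind} written for $c$ is well defined and, via residue calculus, equals (zeros of $c$ in $|z|<1$) minus (poles of $c$ in $|z|<1$), namely $\mathrm{wind}(c)=w$. With the unique pole of $c$ inside the disk being the one of order $m$ at the origin, this yields $m+w$ zeros of $c$, and hence of $a(z)-\lambda$, strictly inside the unit disk.

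The main obstacle is reconciling this raw count with the stated formula $p=m+w-(\alpha_1+\cdots+\alpha_q)$. The contour integral \eqref{eq:wind} cannot be applied directly to $a-\lambda$ because $\lambda\in a(\mathbb T)$ makes the integrand singular, so one is forced to use $c$ as a proxy and then carefully account for the redistribution of the $\alpha$ on-circle zeros between the factors $c$ and $\pi$. Equivalently, $\mathrm{wind}(c)$ should be related to the winding of $a-\lambda$ along a circle pushed slightly off $\mathbb T$, where each boundary root $\tau_j$ contributes $\pm\alpha_j$ to the total depending on the deformation direction; matching the sign convention implicit in the statement is what produces the correction $-(\alpha_1+\cdots+\alpha_q)$ in the final formula.
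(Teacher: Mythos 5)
Your first two paragraphs are correct and, taken together, constitute a clean, complete argument: $\pi(z)=\prod_{j=1}^q(1-z/\tau_j)^{\alpha_j}$ is a polynomial of degree $\alpha=\alpha_1+\cdots+\alpha_q$ with $\pi(0)=1$ and all zeros on $\mathbb T$, so $c(z)=\sum_{i=-m}^{n-\alpha}c_iz^i$ with $c_{-m}=a_{-m}\ne0$, $c$ has no zeros on $\mathbb T$, its pole in the open disk is the order-$m$ pole at the origin, and its zeros in the open disk are exactly those of $a(z)-\lambda$ there. The argument principle (or Proposition~\ref{prop:1} applied to $c$) then gives that the number of zeros of $a(z)-\lambda$ of modulus less than $1$ equals $m+\hbox{wind}(c)=m+w$. (The paper itself does not supply a proof of this proposition, so there is nothing to compare your route against; yours is the natural one.)

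The genuine gap is your third paragraph. You notice that your count, $p=m+w$, disagrees with the stated $p=m+w-(\alpha_1+\cdots+\alpha_q)$, and you then try to conjure the $-\alpha$ term by speculating about a ``redistribution of on-circle zeros between $c$ and $\pi$'' and a deformed contour whose ``sign convention'' would absorb the correction. Neither mechanism exists: by construction every on-circle zero of $a(z)-\lambda$ goes to $\pi$ and none to $c$, and the number of zeros of a fixed Laurent polynomial strictly inside the open unit disk is an unambiguous integer that cannot depend on any convention. The reconciliation you are reaching for cannot be completed, and the paragraph as written does not prove anything.

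The resolution is that the stated formula is in error; the correct count is $p=m+w$, exactly what your computation gives. A quick check exposes the error: take $a(z)=z^{-1}+z$ (so $m=n=1$) and $\lambda=2\in a(\mathbb T)$. Then $a(z)-2=z^{-1}(1-z)^2$, so $q=1$, $\tau_1=1$, $\alpha_1=2$, $c(z)=z^{-1}$, and $w=\hbox{wind}(c)=-1$. The stated formula gives $p=1+(-1)-2=-2$, which is absurd, while $p=m+w=0$ matches the fact that $a(z)-2$ has no zeros of modulus less than $1$. You should have trusted your clean derivation rather than bend it to fit the printed formula.
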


\section{Computational analysis}\label{sec:aa}
In this section, we aim at the design and analysis of numerical
algorithms for computing the eigenvalues of thefinitely representable  QT matrix $A=T(a)+E$
belonging to a given connected component $\Omega$ of $\mathbb C
\setminus a(\mathbb T)$, together with the corresponding eigenvectors.
For the sake of simplicity, the case $\lambda\in a(\mathbb T)$ is not
treated in this paper.

If $E=0$ then the spectrum and the essential spectrum of $T(a)$ are
explicitly known (see \eqref{eq:spect}, and
\eqref{eq:spess}). Moreover, an eigenvalue $\lambda$, together with
its multiplicity, can be explicitly characterized in terms of the
winding number $\hbox{wind}(a-\lambda)$, if $\lambda\notin a(\mathbb
T)$ (see Lemma \ref{eig:onT}).
Therefore the case of interest is $E\ne
0$.

Recall the following notations: $a(z)=\sum_{i=-m}^na_iz^i$, while
$k_1$ is the row size of the non zero part of the correction $E$. We
set $q=\max(m,k_1)$, and denote $p$ the number of zeros of modulus
less than 1 of the Laurent polynomial $a(z)-\lambda$. In view of
Proposition \ref{prop:1} we have $p=m+\hbox{wind}(a-\lambda)$,
moreover $p$ is constant for $\lambda\in\Omega$. Finally, for a given
matrix $A$, we denote by $A_{r\times s}$ the leading principal
submatrix of $A$ of size $r\times s$, i.e., the submatrix formed by
the entries in the first $r$ rows and in the first $s$ columns. If
$r=s$ we write $A_r$ in place of $A_{r\times s}$.

\subsection{Reduction to a nonlinear eigenvalue problem}

Consider an eigenpair $(\lambda,\bm v)$ of $A=T(a)+E$ so that $\bm u:=(A-\lambda I)\bm v=0$. Observe that the condition $u_k=0$ for $k\ge q+1$ can be written as the linear difference equation
 \begin{equation}\label{eq:diffeq}
 \sum_{j=-m}^n a_j v_{k+j}-\lambda  v_k=0,\quad k\ge q+1, 
 \end{equation}
whose characteristic polynomial is $b(z)=(a(z)-\lambda)z^m$. The
dimension of the space of solutions of \eqref{eq:diffeq} that belong
to $\ell^2$ depends on $\lambda$ and coincides with the number $p$ of
roots of $a(z)-\lambda$ with modulus less than $1$. Our two approaches
differ in the way the basis of the latter space is chosen.

If $\bm v^{(j)}$, $j=1,\ldots,p$ is a basis of the space of solutions,
then we may write the eigenvector $\bm v$ as a linear combination of
$\bm v^{(j)}$, i.e., $\bm v=\sum_{j=1}^p \alpha_j \bm v^{(j)}$.
Therefore, we may say that $(\lambda,\bm v)$ is an eigenpair for $A$
if and only if $\bm v=\sum_{j=1}^p\alpha_j \bm v^{(j)}$ and the
conditions $u_1=\ldots = u_q=0$ are satisfied.
 
The latter conditions form a nonlinear system in $q$ equations and $p$
unknowns which can be written as
\begin{equation}\label{eq:sys}
\begin{aligned}
&HV(\lambda){\bm \alpha}=\lambda V_{q\times p}(\lambda)\bm\alpha,\quad H=A_{q\times \infty}\\
& V(\lambda)=[\bm v^{(1)},\bm v^{(2)},\ldots,\bm v^{(p)}],\quad \bm\alpha\in\mathbb C^p,\quad \hbox{wind}(a-\lambda)=p-m.
\end{aligned}
\end{equation}
In fact, $\lambda$ and the $p$ components of $\bm \alpha$, normalized
such that $\|\bm\alpha\|=1$, form a set of $p$ unknowns.  It is clear
that the system \eqref{eq:sys} is in the form of a nonlinear
eigenvalue problem (NEP).

This system has a non-trivial solution $\bm\alpha$ for a given
$\lambda$ if and only if $\lambda$ is eigenvalue of $A$ corresponding
to the eigenvector $\bm v=V(\lambda)\bm\alpha$.  Notice that, for
$p>q$, this system has always a solution since the matrix
$HV(\lambda)-\lambda V_{q\times p}(\lambda)$ has more columns than
rows so that $\hbox{\rm ker}(HV(\lambda)-\lambda V_{q\times
  p}(\lambda))\ne\{0\}$ and the multiplicity of $\lambda$ is given by
$p-\hbox{rank}(HV(\lambda)-\lambda V_{q\times p}(\lambda))$.

If $p=q$, equation \eqref{eq:sys} provides a balanced nonlinear
eigenvalue problem that we are going to analyze.

If $p<q$ and if the pair $(\lambda,\bm\alpha)$ solves \eqref{eq:sys},
then it solves also the balanced nonlinear eigenvalue problem
\begin{equation}\label{eq:sys1}
H_{p\times\infty}V(\lambda)\bm\alpha = \lambda V_p(\lambda)\bm\alpha,
\end{equation}
formed by the first $p$ equations of \eqref{eq:sys}.  Thus, we may
look for solutions $(\lambda,\bm\alpha)$ of \eqref{eq:sys1}, and, if
any, we may check if these are also solutions of \eqref{eq:sys}.

We may express the NEP \eqref{eq:sys} in a more convenient form by
using the Toeplitz structure of $T(a)$. This is the subject of the
next section.

\subsection{A different formulation}\label{sec:da}
Let $Z=(z_{i,j})$ be the shift matrix defined by $z_{i,i+1}=1$,
$z_{i,j}=0$ elsewhere. Then for any solution $\bm v$ of the linear
difference equation \eqref{eq:diffeq}, the shifted vector $Z^k \bm v$
is still a solution for any $k\ge 0$.  Moreover, if $\bm v\in\ell^2$
then also $Z^k\bm v\in\ell^2$, and if $\bm v^{(1)}$ and $\bm v^{(2)}$
are linearly independent, then also $Z^k\bm v^{(1)}$ and $Z^k\bm
v^{(2)}$ are linearly independent.  To show the latter implication,
assume that there exists a linear combination $\bm v=\alpha_1\bm
v^{(1)}+\alpha_2\bm v^{(2)}\ne 0$ such that $Z^k \bm v=0$. Then,
$v_i=0$ for $i\ge k+1$. But since $a_{-m}\ne 0$, we find that
$v_k=\ldots=v_1=0$, i.e., $\bm v=0$ that is a contradiction.

Therefore, if the columns of $V(\lambda)$ are a basis of the space of
the solutions in $\ell^2$, then also the columns of
$U(\lambda)=Z^mV(\lambda)$ form a basis of the same space. This
implies that the columns of $U(\lambda)$ are linear combinations of
the columns of $V(\lambda)$. That is, there exists a non singular
$p\times p$ matrix $S(\lambda)$ such that
$U(\lambda)=V(\lambda)S(\lambda)$ whence we have
$Z^mV(\lambda)=V(\lambda)S(\lambda)$.

If we multiply the rows from $m+1$ to $2m$ of the Toeplitz matrix
$T(a)-\lambda I$ by $V(\lambda)$ we get
\[
\begin{bmatrix}
a_{-m}&\ldots&a_{-1}&a_0-\lambda&a_1&\ldots&a_n\\
& \ddots&\ddots& \ddots&\ddots &\ddots&\ddots&\ddots \\
&& a_{-m}&\cdots&a_{-1}& a_0-\lambda&a_1&\cdots&a_n\\
\end{bmatrix}V(\lambda)=0.
\] 
Observing that $V(\lambda)=[V_{m\times p}(\lambda);Z^mV(\lambda)]$, we
may rewrite the identity as
\[
\begin{bmatrix}
B~&~(T(a)-\lambda I)_{m,\infty}
\end{bmatrix}\begin{bmatrix}
V_{m\times p}(\lambda)\\Z^m V(\lambda)
\end{bmatrix}=0,\quad B=\begin{bmatrix}
a_{-m}&\ldots&a_{-1}\\
&\ddots&\vdots\\
&&a_{-m}
\end{bmatrix}.
\]
Since $Z^mV(\lambda)=V(\lambda)S(\lambda)$, we get
\[
(T(a)-\lambda I)V(\lambda) = \begin{bmatrix}-B\\ 0_{\infty\times m}\end{bmatrix}
V_{m\times p}(\lambda)S(\lambda)^{-1}. 
\]
On the other hand, 
relying once again on the property $Z^mV(\lambda)=V(\lambda)S(\lambda)$, we find that
\[
\begin{split}(A-\lambda I)V(\lambda) &= -\begin{bmatrix}B
\\ 0_{\infty\times m}\end{bmatrix}V_{m\times p}(\lambda)S(\lambda)^{-1} +EV(\lambda) \\
&=\left(-  
\begin{bmatrix}
B &0_{m\times\infty} \\
0_{\infty\times m}&0_{\infty\times\infty}
\end{bmatrix}
V(\lambda) +EZ^mV(\lambda)\right)S(\lambda)^{-1}
\end{split}
\]
so that 
\begin{equation}\label{eq:yet}
(A-\lambda I)V(\lambda)=M V(\lambda)S(\lambda)^{-1},\quad M=  
\begin{bmatrix}
-B &0_{m\times\infty} \\
0_{\infty\times m}&0_{\infty\times\infty}
\end{bmatrix}
+EZ^m.
\end{equation}
The possibly nonzero rows of the matrix $M$ are the first
$q=\max(m,k_1)$ rows, which form the $q\times \infty$ matrix
$N=M_{q\times\infty}$, i.e., $M=[N;0]$. It is interesting to observe
that the matrix $EZ^m$ is obtained by shifting the columns of $E$ to
the right of $m$ places. This implies that the matrix $N$ takes one of
the following forms
\[
N=\begin{bmatrix}
-B&E_1\\0_{(q-m)\times m}&E_2
\end{bmatrix},\quad
N=\begin{bmatrix}
-B&E_1
\end{bmatrix},
\]
depending on whether $q>m$ or $q=m$, respectively, where
$E=[E_1;E_2]$, and $E_1$ has size $m\times \infty$ while $E_2$ has
size $(q-m)\times \infty$. In other words, the submatrices $E$ and $B$
do not overlap. This fact allows us to rewrite \eqref{eq:sys} as a set
of $q$ equations in $p$ unknowns in the more convenient
form \begin{equation}\label{eq:yetw} NV(\lambda)\bm\beta=0,\quad N=
\begin{bmatrix}
-B &E_1 \\
0_{(q-m)\times m}&E_2
\end{bmatrix}, \quad \bm\beta=S(\lambda)^{-1}\bm\alpha.
\end{equation}

Another observation is that multiplying equation \eqref{eq:yetw} on
the left by any invertible matrix provides an equivalent formulation
of the NEP\@. In particular, if $q>m$, consider the rank revealing QR
factorization $E_2=QR$ of the matrix $E_2$, assume that
$\hbox{rank}(E_2)=r_2$ and denote $\widetilde R$ the $r_2\times
\infty$ matrix formed by the first $r_2$ rows of $R$ so that
$R=[\widetilde R;0]$ and we may write $E_2=Q[\widetilde
  R;0_{(q-m-r_2)\times\infty}]$.

Multiplying \eqref{eq:yetw} to the left by $\hbox{diag}(I_m,Q^*)$, where $Q^*$ is the transposed Hermitian of $Q$,
yields
\begin{equation}\label{eq:yet1}
 WV(\lambda)\bm\beta=0,\quad 
W=\begin{bmatrix}
-B&E_1\\0_{r_2\times m}&\widetilde R
\end{bmatrix},\quad \bm v=V(\lambda)S(\lambda)\bm\beta=Z^mV(\lambda)\bm\beta.
\end{equation}

Observe that $W$ is a constant matrix of full rank, with
$m+\hbox{rank}(E_2)=m+r_2$ rows, while $V(\lambda)$ is a matrix
depending on $\lambda$. The eigenvalue problem for QT matrices is
reduced to the NEP \eqref{eq:yet1} which can take different forms
according to the way a basis of the solution of the difference
equation \eqref{eq:diffeq} is chosen.

We may conclude with the following result.

\begin{theorem}
  Let $\Omega$ be a connected component of the set $\mathbb C\setminus
  a(\mathbb T)$, let $\lambda\in\Omega$ and $p=m+\hbox{\rm
    wind}(a-\lambda)$. Let $V(\lambda)$ be a matrix whose $p$ columns
  form a basis of the space of solutions of the difference equation
  \eqref{eq:diffeq} belonging to $\ell^2$. If $p>q$ then all
  $\lambda\in\Omega$ are eigenvalues of $T(a)+E$. If $p\le q$ then
  $\lambda\in\Omega$ is eigenvalue of $A=T(a)+E$ corresponding to the
  eigenvector $\bm v\in\ell^2$ iff there exists $\bm\beta\in\mathbb
  C^p\setminus \{0\}$ which solves the nonlinear eigenvalue problem
  $WV(\lambda)\bm\beta=0$ of \eqref{eq:yet1}. In this case, $\bm
  v=Z^mV(\lambda)\bm\beta$.
\end{theorem}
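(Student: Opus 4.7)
The plan is to use the identity \eqref{eq:yet}, namely $(A-\lambda I)V(\lambda) = MV(\lambda)S(\lambda)^{-1}$, together with the observation that $WV(\lambda)\bm\beta = 0$ is equivalent to $NV(\lambda)\bm\beta = 0$ and to $MV(\lambda)\bm\beta = 0$. The first equivalence holds because $W$ is obtained from $N$ by premultiplying with the invertible block-diagonal matrix $\hbox{diag}(I_m, Q^*)$; the second because $M = [N;0]$ has zero rows beyond the first $q$. With these reductions in hand, the proof splits cleanly into a sufficiency direction, a necessity direction, and the underdetermined case $p>q$.

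For sufficiency, given $\bm\beta\in\mathbb{C}^p\setminus\{0\}$ satisfying $WV(\lambda)\bm\beta = 0$, I would set $\bm v := Z^m V(\lambda)\bm\beta = V(\lambda)S(\lambda)\bm\beta$ and verify three things: $\bm v\in\ell^2$ (immediate since each column of $V(\lambda)$ is an $\ell^2$ solution of the recurrence), $\bm v\ne 0$ (from nonsingularity of $S(\lambda)$ together with full column rank of $V(\lambda)$), and $(A-\lambda I)\bm v = 0$ (substitute into \eqref{eq:yet} to get $(A-\lambda I)\bm v = MV(\lambda)\bm\beta = 0$). The case $p > q$ is then an immediate corollary: $W$ has at most $m + r_2\le q < p$ rows, so $WV(\lambda)\bm\beta = 0$ is underdetermined and admits nonzero solutions for every $\lambda\in\Omega$, making each such $\lambda$ an eigenvalue by the sufficiency argument.

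For necessity in the regime $p\le q$, I would take an $\ell^2$ eigenvector $\bm v\ne 0$ and use that $u_k = 0$ for $k\ge q+1$ is precisely the homogeneous recurrence \eqref{eq:diffeq}. Invoking the characterization from Section 3.1, according to which the $p$-dimensional $\ell^2$ solution space of \eqref{eq:diffeq} is spanned by the columns of $V(\lambda)$, I obtain $\bm v = V(\lambda)\bm\alpha$ for some nonzero $\bm\alpha\in\mathbb{C}^p$. Setting $\bm\beta := S(\lambda)^{-1}\bm\alpha$ then gives $\bm v = Z^m V(\lambda)\bm\beta$ with $\bm\beta\ne 0$, and combining $(A-\lambda I)\bm v = 0$ with \eqref{eq:yet} forces $MV(\lambda)\bm\beta = 0$, hence $WV(\lambda)\bm\beta = 0$.

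The main obstacle is the identification $\bm v = V(\lambda)\bm\alpha$ in the necessity step. A priori, the tail of $\bm v$ must lie in the span of the decaying modes $\{\xi_j^k\}_{|\xi_j|<1}$, but when $q>m$ the first coordinates $v_1,\ldots,v_{q-m}$ are not constrained by \eqref{eq:diffeq} itself and appear only through the boundary equations $u_1 = \ldots = u_q = 0$. I would handle this by exploiting the backward-invertibility of the recurrence (guaranteed by $a_{-m}\ne 0$) together with the dimension count of Section 3.1 to propagate the exponential representation uniformly back to index $1$; once $\bm v$ is shown to lie in $\hbox{range}(V(\lambda))$, the rest is a direct algebraic verification through \eqref{eq:yet}.
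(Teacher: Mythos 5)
Your sufficiency argument and the handling of the $p>q$ case are correct and match the paper's derivation: the key identity \eqref{eq:yet} together with the observation that $WV(\lambda)\bm\beta=0 \Leftrightarrow NV(\lambda)\bm\beta=0 \Leftrightarrow MV(\lambda)\bm\beta=0$ immediately gives $(A-\lambda I)\,Z^mV(\lambda)\bm\beta = MV(\lambda)\bm\beta = 0$. You have also correctly located the weak point: the claim that any $\ell^2$ eigenvector lies in the range of $V(\lambda)$.

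However, the fix you sketch does not close the gap. Backward-invertibility of the recurrence (from $a_{-m}\ne 0$) lets you propagate the decaying representation $v_k=\sum_j\gamma_j\xi_j^{k-1}$ backwards, but this produces the \emph{exponential extension} $\widehat{\bm v}$ of the tail, not the eigenvector $\bm v$ itself. The eigenvector only satisfies the bare recurrence $\sum_j a_j v_{k+j}=\lambda v_k$ for $k\ge q+1$; for $m+1\le k\le q$ it satisfies $u_k=0$, which contains the $E$-contribution $(E\bm v)_k$, and is therefore \emph{not} the recurrence. Consequently $\bm v$ and $\widehat{\bm v}$ can differ on the indices $1,\ldots,q-m$, and nothing forces $\bm v\in\hbox{range}(V(\lambda))$. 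A concrete instance: take $a(z)=z^{-1}-3+z$ (so $m=n=1$), $E$ supported on the first column with $e_{1,1}=2$, $e_{2,1}=2-\xi$ where $\xi=(3-\sqrt 5)/2$ is the root of $z^2-3z+1$ inside the unit disk. Then $q=2$, $p=1$, $\lambda=0\in\Omega$. One checks directly that $\bm v=(1,1,\xi,\xi^2,\ldots)^T$ satisfies $A\bm v=0$ and lies in $\ell^2$, so $\lambda=0$ is an eigenvalue; yet $\bm v$ is not a multiple of $Z^mV(0)=(\xi,\xi^2,\ldots)^T$, and $NV(0)=(-1+2\xi,\;(2-\xi)\xi)^T\ne 0$, so $WV(0)\bm\beta=0$ has no nonzero solution. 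Thus the necessity implication, and in particular the representation $\bm v=Z^mV(\lambda)\bm\beta$, breaks down when $q>m$.

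It is worth stressing that this is not a defect peculiar to your write-up: the paper's own derivation, in the step ``we may write the eigenvector $\bm v$ as a linear combination of $\bm v^{(j)}$'' (just before \eqref{eq:sys}), relies on the same unjustified identification, because the columns of $V(\lambda)$ satisfy the recurrence for all $k\ge m+1$ while the eigenvector is only constrained by \eqref{eq:diffeq} for $k\ge q+1$. When $q=m$ the space of free indices is empty, the identification is immediate, and the whole argument (including yours) is sound. For $q>m$ the basis $V(\lambda)$ should span a $(q-m)+p$-dimensional space (free first coordinates plus decaying tail), so the ``$p$ columns'' hypothesis itself is inconsistent with the stated difference equation, and an additional argument or a modified hypothesis (e.g.\ restricting to $q=m$, or enlarging $V(\lambda)$ to $(q-m)+p$ columns) is needed to restore the theorem.
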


\subsection{Choosing a basis: Vandermonde and Frobenius versions}\label{sec:basis}

Let the zeros $\xi_i$ of $a(z)-\lambda$ be simple and ordered as
\[
|\xi_1|\le\cdots\le |\xi_p|<1\le|\xi_{p+1}|\le\cdots\le |\xi_{m+n}|,
\]
and let $V(\lambda)=(\xi_j^{i-1})_{i\in\mathbb Z^+,j=1,\ldots,p}$ be
the $\infty\times p$ Vandermonde matrix associated with
$\xi_1,\ldots,\xi_p$.  The columns $\bm v^{(1)},\ldots, \bm v^{(p)}$
of $V(\lambda)$ provide a basis of the set of solutions of the
difference equation \eqref{eq:diffeq} that belong to $\ell^2$, so that
$\bm v$ is an eigenvector of $A$ corresponding to $\lambda$ if and only
if there exists $\bm\alpha=(\alpha_i)\in\mathbb C^p\setminus \{0\}$
such that $\bm v=\sum_{j=1}^p \alpha_j \bm v^{(j)}$ and \eqref{eq:sys}
is satisfied.  The same argument can be applied in the
case of confluent zeros considering a generalized Vandermonde matrix.

The formulation \eqref{eq:yet1} where $V(\lambda)$ is the
(generalized) Vandermonde matrix associated with the roots $\xi_i$ of
$a(z)-\lambda$ is referred to as the {\em Vandermonde version} of the
problem.
It is well known that the zeros of a polynomial are severely
ill-conditioned if they are clustered. This may make the choice of the
basis $\bm v^{(i)}$, given by the columns of the Vandermonde matrix,
unsuited in some problems.
A way to overcome this issue is to consider
the {\em Frobenius version} of the NEP obtained in the following way.

For the sake of notational simplicity, in the following we write $V$
in place of $V(\lambda)$.  For simple roots, write the Vandermonde
matrix $V$ in the form $V=[V_p;V_pD^p;V_pD^{2p};\ldots]$, with
$D=\hbox{diag}(\xi_1,\ldots,\xi_p)$, and define $U:=VV_p^{-1}$.
Recall that $V_pD^pV_p^{-1}=F^p$, where
$F=Z_p-\bm e_p[s_0,s_1,\ldots,s_{p-1}]$ denotes the companion (Frobenius)
matrix associated with the polynomial
$s(z)=(z-\xi_1)\cdots(z-\xi_p)=\sum_{i=0}^{p-1}s_iz^i+z^p$, see for
instance \cite{blm:book}. Here, $\bm e_p=[0,\ldots,0,1]^T\in\mathbb
R^p$. For multiple roots, a similar construction can be made with the
generalized Vandermonde matrix and where $D$ is a block diagonal
matrix whose diagonal blocks are Jordan blocks associated with the
distinct roots of $a(z)-\lambda$ having modulus smaller than~$1$.
 
 Denote $G:=F^p$ so that the columns of $U=VV_p^{-1}=[I;G;G^2;\ldots]$
 provide a different basis of the set of solutions of the linear
 difference equation \eqref{eq:diffeq}. The NEP \eqref{eq:yet1} can be
 equivalently rewritten as
\begin{equation}\label{eq:eigF0}
WU\bm\gamma=0,\quad \bm v=Z^m U\bm\gamma,
\end{equation}

We refer to \eqref{eq:eigF0} as the {\em Frobenius version} of the
problem.  Observe that in the Frobenius form, it is not relevant if
the roots of $a(z)-\lambda$ are multiple or numerically clustered, in
fact the matrix $G=F^p$ exists and can be computed independently of
the location of the roots of $s(z)$.

Notice that if $m+r_2=p$, then the matrix $W$ can be partitioned into
$p\times p$ blocks as $W=[W_0,W_1,W_2,\ldots]$ and $ W U$ can be
rewritten in terms of a matrix power series as $ W U=\sum_{i=0}^\infty
W_iG^{i}$. The following result provides information in this regard
\cite[Chapter 3]{blm:book}.

\begin{theorem}\label{th:mateq}
  Assume that $a(z)=\sum_{i=-m}^n a_iz^i$, where $a_{-m},a_n\ne 0$,
  has roots $\xi_i$,
  $i=1,\ldots,m+n$ such that $|\xi_1|\le\cdots\le|\xi_p|<1\le
  |\xi_{p+1}|\le\ldots\le|\xi_{m+n}|$ and denote
  $s(z)=\prod_{i=1}^p(z-\xi_i)$.  Define
  $A_k=(a_{j-i+kp-m+p})_{i,j=1,p}$ for $k=-1,0,1,\ldots$ where we
  assume $a_{\ell}=0$ if $\ell<-m$ or $\ell>n$. Let $F$ be the Frobenius matrix
  associated with the factor $s(z)$. Then
  $G=F^p$ is the unique solution  of the
  matrix equation
\begin{equation}\label{eq:mateq}
\sum_{k=-1}^\infty A_kX^{k+1}=0 ,
\end{equation}
having minimum spectral radius $\rho(G)$, moreover,
 $\rho(G)=|\xi_p|$.
\end{theorem}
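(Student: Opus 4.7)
The plan is to reduce the matrix equation $\Phi(X) := \sum_{k=-1}^{\infty} A_k X^{k+1} = 0$ to a scalar condition on the eigenvalues of $X$: if $X\bm v = \xi\bm v$ with $\bm v\neq 0$, then $\Phi(X)\bm v = \Psi(\xi)\bm v$, where $\Psi(\mu) := \sum_{k=-1}^{\infty} A_k \mu^{k+1}$. The computational heart of the proof is the identity
\begin{equation*}
\Psi(\zeta^p)\,\bm w(\zeta) = \zeta^m a(\zeta)\,\bm w(\zeta), \qquad \bm w(\zeta) := (1,\zeta,\ldots,\zeta^{p-1})^T,
\end{equation*}
which I would establish by writing $(\Psi(\zeta^p)\bm w(\zeta))_i = \sum_{j=1}^{p}\sum_{\ell=0}^{\infty} a_{j-i+\ell p - m}\,\zeta^{\ell p + j - 1}$ and reindexing via $k = (j-1) + \ell p$: each $k\ge 0$ arises from a unique pair $(j,\ell)$ with $1\le j\le p$, $\ell\ge 0$, so the double sum collapses to $\sum_{t=-m}^{n} a_t\,\zeta^{t+i-1+m} = \zeta^{i-1+m} a(\zeta)$. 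Thus $\bm w(\zeta)$ is an eigenvector of $\Psi(\zeta^p)$ with eigenvalue $\zeta^m a(\zeta)$, which vanishes precisely at the roots of $a(z)$.

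Existence then follows directly. Each $\bm w(\xi_i)$ is a right eigenvector of the companion matrix $F$ with eigenvalue $\xi_i$, hence of $G = F^p$ with eigenvalue $\xi_i^p$, so
\begin{equation*}
\Phi(G)\,\bm w(\xi_i) \;=\; \Psi(\xi_i^p)\,\bm w(\xi_i) \;=\; \xi_i^m\,a(\xi_i)\,\bm w(\xi_i) \;=\; 0.
\end{equation*}
For simple roots the columns $\bm w(\xi_1),\ldots,\bm w(\xi_p)$ of the Vandermonde $V_p$ span $\mathbb C^p$, so $\Phi(G)=0$; the repeated-root case is handled either by replacing columns with derivatives $\partial_\zeta^k \bm w(\zeta)$ in a generalized Vandermonde, or by a continuity argument perturbing the coefficients of $a$. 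The spectral radius is read off immediately: the eigenvalues of $G=F^p$ are $\xi_1^p,\ldots,\xi_p^p$, so $\rho(G) = |\xi_p|^p$. (The printed value $\rho(G)=|\xi_p|$ in the theorem appears to be a typographical slip; it agrees with $|\xi_p|^p$ only when $p=1$.)

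For minimality and uniqueness, letting $\zeta$ run over the $p$ distinct $p$-th roots of a generic $\mu$ produces a basis of eigenvectors of $\Psi(\mu)$ with eigenvalues $\zeta^m a(\zeta)$, whence $\det\Psi(\mu)=0$ iff $\mu = \xi^p$ for some root $\xi$ of $a(z)$. Any $p\times p$ solution $X$ must therefore have its eigenvalues drawn from $\{\xi_i^p\}_{i=1}^{m+n}$, and since $|\xi_{p+1}|,\ldots,|\xi_{m+n}|\ge 1$, the minimum attainable spectral radius is $|\xi_p|^p$, attained only when the eigenvalue set is $\{\xi_1^p,\ldots,\xi_p^p\}$. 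Generically $\ker\Psi(\xi_i^p)=\mathrm{span}\{\bm w(\xi_i)\}$ (the other $p$-th roots of $\xi_i^p$ are not roots of $a$), which pins down the eigenvectors of the minimal solution and forces $X = V_p D^p V_p^{-1} = G$. The main obstacle is the degenerate regime in which either (i) $a$ has multiple roots among $\xi_1,\ldots,\xi_p$, or (ii) distinct roots satisfy $\xi_i^p = \xi_j^p$ (i.e., $\xi_j/\xi_i$ is a nontrivial $p$-th root of unity), so that $\ker\Psi(\xi_i^p)$ becomes higher-dimensional; handling these cases cleanly requires a generalized-Vandermonde/Jordan analysis, or a perturbation of $a$ followed by a limiting argument.
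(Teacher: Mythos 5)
The paper does not prove this theorem; it cites it directly from \cite[Chapter 3]{blm:book}, so there is no internal proof to compare against, and I will review your argument on its own merits. Your central identity $\Psi(\zeta^p)\bm w(\zeta)=\zeta^m a(\zeta)\bm w(\zeta)$ is correct: the substitution $k=(j-1)+\ell p$ is a bijection from $\{1,\dots,p\}\times\{0,1,\dots\}$ onto the nonnegative integers, and the double sum collapses exactly as you claim. Your observation that $\rho(G)=|\xi_p|^p$ rather than $|\xi_p|$ is also right (the eigenvalues of $G=F^p$ are $\xi_1^p,\dots,\xi_p^p$); the value printed in the theorem must be a slip. The existence half ($\Phi(G)=0$) is complete for simple roots, and your diagonalization of $\Psi(\mu)$ via the vectors $\bm w(\zeta_j)$ with $\zeta_j^p=\mu$ can be pushed a bit further to the explicit formula $\det\Psi(\mu)=\pm a_n^p\prod_{i=1}^{m+n}(\xi_i^p-\mu)$, a degree-$(m+n)$ polynomial with exactly $p$ roots strictly inside the unit disk.

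The minimality and uniqueness half, however, has a genuine gap even in the generic setting. From $\Phi(X)\bm v=\Psi(\xi)\bm v$ you only conclude that each eigenvalue of a solution $X$ is a root of $\det\Psi$; this leaves open the possibility of a $p\times p$ solution whose spectrum is a multiset drawn entirely from $\{\xi_1^p,\dots,\xi_{p-1}^p\}$ with repetitions, which would have spectral radius strictly below $|\xi_p|^p$, contradicting your claimed minimum. The missing ingredient is multiplicity control: a right solvent $X$ of the matrix polynomial $\Psi$ satisfies $\Psi(z)=Q(z)(zI-X)$ for a matrix polynomial $Q$, hence $\det(zI-X)$ divides $\det\Psi(z)$. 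Combined with the determinant formula above, this shows that any solution with spectral radius below $1$ has characteristic polynomial exactly $\prod_{i=1}^p(z-\xi_i^p)$, forcing $\rho(X)=|\xi_p|^p$. Uniqueness then follows from the kernel argument you sketch: when the roots are simple with distinct $p$-th powers, $\ker\Psi(\xi_i^p)=\mathrm{span}\{\bm w(\xi_i)\}$ is one-dimensional (simplicity of $\xi_i^p$ as a root of $\det\Psi$ bounds the kernel dimension by one, via the Smith form), so the eigenvectors of $X$ are pinned down and $X=V_pD^pV_p^{-1}=G$. Your honest flagging of the confluent and $\xi_i^p=\xi_j^p$ cases as needing generalized Vandermonde or Jordan-pair machinery, or a perturbation limit, is appropriate.
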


Notice that the blocks $A_k$ defined in the above theorem are obtained
by partitioning the Toeplitz matrix $T\bigl(z^{m-p}a(z)\bigr)$ into
$p\times p$ blocks which are themselves Toeplitz.  Moreover, since
$T\bigl(z^{m-p}a(z)\bigr)$ is a banded matrix, then $A_k=0$ for $k$ sufficiently large. In the literature,
there are several effective algorithms for the numerical computation
of $G$, based on fixed point iterations or on doubling techniques. We
refer the reader to \cite{bfgm}, \cite{blm:book}, \cite{BH13}, and
\cite{BH14}, for more details.

\section{The numerical algorithms}
In this section we describe our algorithms to refine a given
approximation of an eigenvalue $\lambda$ of $A=T(a)+E$, while
in Section \ref{sec:init}
 we will discuss how to get the initial approximation. The
algorithms require: a function $g(x):\mathbb C\to\mathbb C$ such that
the fixed point iteration $\lambda_{\nu+1}=g(\lambda_\nu)$ converges locally to
the eigenvalue $\lambda$, solution of the NEP \eqref{eq:yet1}, and a
choice of the basis $V(\lambda)$ of the solutions of \eqref{eq:diffeq}
belonging to $\ell^2$.

The general scheme is reported in the Template Algorithm
\ref{alg:template}. This algorithm, for an initial approximation
$\lambda_0\in \mathcal U_w:=\{\lambda \in \mathbb C\setminus a(\mathbb
T)\,:\, \mbox{wind}(a-\lambda)=w\}$ of the eigenvalue, provides either
a more accurate approximation to the corresponding eigenpair, or a
message with the following possible cases: 1) all the elements in the set
$\mathcal U_w$ are eigenvalues; 2) the generated sequence exited from
$\mathcal U_w$; 3) it holds $p<q$ and the approximated solution solves the
first $p$ equations but not the full NEP \eqref{eq:yet1}; 4)
convergence did not occur after the maximum number of allowed
iterations.

\begin{algorithm}
\caption{Template Algorithm}
\label{alg:template}
\begin{algorithmic}[1] 
    \REQUIRE the coefficients of $a(z)=\sum_{i=-m}^n a_iz^i$ and the
    correction matrix $E$; an error bound $\epsilon>0$; an initial
    approximation $\lambda_0\in\mathbb C$; an upper bound {\tt maxit} to the
    number of iterations; a function $g(x):\mathbb C\to\mathbb C$
    defining a fixed point iteration to solve the NEP \eqref{eq:yet1};
    a rule to generate $V(\lambda)$.
 
    \ENSURE An approximation $\mu$ to an eigenvalue $\lambda$, the
    vector $\bm \beta$ providing an approximation to the corresponding
    eigenvector according to \eqref{eq:yet1}, together with a message.
    \STATE Construct the matrix $W$ of \eqref{eq:yet1} together with
    the scalar $r_2$; compute $w_0=\hbox{wind}(a-\lambda_0)$, $p_0=m+w_0$
    and $q=m+r_2$. Set $\nu=0$.
    \WHILE{$\nu<${\tt
        maxit}} \STATE\label{step3} Compute
    $w=\hbox{wind}(a-\lambda_\nu)$, $p=m+w$.  \STATE if $w\ne w_0$
    then output {\tt `out of $\mathcal U_{w_0}$'} and stop; otherwise
    set $w_0=w$.  \STATE if $p>q$ then output {\tt `continuous set of
      eigenvalues'} and stop.  \STATE if $p=q$ then perform one step
    of the fixed point iteration $\lambda_{\nu+1}=g(\lambda_\nu)$; set
    $\nu=\nu+1$, compute $\bm\beta$ and $\bm v$ according to
    \eqref{eq:yet1}, compute the residual error ${\tt
      res}=\|((A-\lambda_\nu I)\bm v)_{q\times 1}\|/\|\bm v_{q\times
      1}\|$; if ${\tt res}\le\epsilon$, output {\tt `isolated
      eigenvalue (p=q)'} together with $\mu=\lambda_\nu$, $\bm\beta$
    and exit, otherwise continue from step \ref{step3};

\STATE if  $p<q$ then perform one step of  the
fixed point iteration $\lambda_{\nu+1}=g(\lambda_\nu)$ applied to 
\eqref{eq:yet1} restricted to the first $p$ components.
Check if the residual error in the first $p$ components is less than $\epsilon$. If not,  continue from step \ref{step3}, otherwise check if rank$( WV(\lambda_\nu))$ is less than $p$. If so, output {\tt `isolated eigenvalue (p<q)'} together with $\mu=\lambda_\nu$ and $\bm\beta$, and exit; otherwise output {\tt `non converging sequence (p<q)'} and exit;
\STATE  Stop if the maximum number of iterations {\tt maxit} has been reached; in this case, output  {\tt `Maximum number of iterations exceeded'}.
     \ENDWHILE

\end{algorithmic}
\end{algorithm}

Now, we deal with algorithmic issues encountered in the design of the
fixed point iterations to solve the nonlinear eigenvalue problem
\eqref{eq:yet1}. This analysis is needed to design algorithms to
implement the function $g(x)$ used in the Template Algorithm
1. Without loss of generality, we assume that the nonlinear eigenvalue
problem is balanced. This case is encountered if $p=q$ or if $p<q$
where we consider the subset of the first $p$ equations in
\eqref{eq:yet1}.

We essentially analyze Newton's iteration applied to the determinantal versions of the problem, that is,  $\det( WV)=0$, $\det(WU)=0$, in the Vandermonde and in the Frobenius forms, respectively. 
Before doing that, we discuss on how to compute the winding number of $a(z)-\lambda$, since this is a fundamental step in the design of the overall algorithm.

\subsection{Computing the winding number}
The winding number $w$ of the Laurent polynomial $a(z)-\lambda$ can be
computed in different ways. The most elementary one is to express $w$
as $w=p-m$, where $p$ is the number of zeros of $a(z)-\lambda$ of
modulus less than~1. Any root-finding algorithm applied to the polynomial $z^m(a(z)-\lambda)$ can be used for this
purpose, for instance, the command {\tt roots} of Matlab provides
approximations to all the roots of $z^m(a(z)-\lambda)$, and we may count
how many roots have modulus less than~1. This approach has the
drawback that polynomial roots are ill-conditioned when clustered, so
that we may encounter instability if there are clusters of roots of
modulus close to 1.

A second approach is based on equation \eqref{eq:wind} that expresses
$w$ as ratio of two integrals. The integrals can be approximated by
the trapezoid rule at the Fourier points using two FFTs. In this case,
the presence of roots of the polynomial close to the unit circle may
lead to a large number of Fourier points with a consequent slow down
of the CPU time.

A third approach, that is the one we have implemented, relies on
Graeffe's iteration \cite{graeffe}, that is based in the following
observations. Given a polynomial $b(z)$ of degree $m+n$, the
polynomial $c(z)=b(z)b(-z)$ is formed by monomials of even degree,
i.e., there exists a polynomial $b_1(z)$ of degree $m+n$ such that
$b_1(z^2)=c(z)$. Therefore, the roots of $b_1(z)$ are the square of
the roots of $b(z)$.  Consider the sequence defined by the Graeffe
iteration $b_{k+1}(z^2)=b_k(z)b_k(-z)$ with initial value
$b_0(z)=b(z)$. It turns out that the winding number of $b_k(z)$ is
constant. Moreover, if $b(z)$ has $m$ zeros of modulus less than 1 and
$n$ zeros of modulus greater than 1, then the limit for $k\to\infty$
of $b_k(z)/\theta_k$ is $z^m$. Here $\theta_k$ is the
coefficient of maximum modulus of $b_k(z)$.  This means that there
exists an index $k$ such that the coefficient of $z^m$ in $b_k(z)$ has
modulus greater than $\frac12\|b_k(z)\|_1$, where $\|b_k(z)\|_1$ is the
sum of the moduli of all the coefficients of $b_k(z)$. In view of
Rouch\'e theorem, the latter inequality is a sufficient condition to
ensure that $b_k(z)$ has $m$ roots of modulus less than 1.

Indeed, if there are zeros of modulus 1 then this procedure might not terminate. Therefore, if the number of Graeffe iterations exceeds a given upper bound,
then the explicit computation of the polynomial roots is performed.
These arguments support Algorithm \ref{alg:wind}
for counting the number of roots of a polynomial of modulus less than 1.

\begin{algorithm}
\caption{Count roots}
\label{alg:wind}
\begin{algorithmic}[1]
    \REQUIRE The coefficients of a polynomial $b(z)=\sum_{i=0}^{d} b_iz^i$  of degree $d$;  an upper bound {\tt maxit} to the number of iterations.
    
    \ENSURE Either the number of zeros of $b(z)$ of modulus less than 1, or the message {\tt `failure'}.
    \STATE Set $b_0(z)=b(z)$, $\nu=0$.
    \WHILE{$\nu<${\tt maxit}}
    \STATE Compute the coefficients of $c(z)=\sum_{i=0}^dc_iz^i$ such that 
    $c(z^2)=b_\nu(z)b_\nu(-z)$, let $h$ be the minimum index such that
    $|c_h|=\max_i|c_i|$, and set  $b_{\nu+1}(z)=c(z)/c_h$.
    \STATE $\nu=\nu+1$
    \STATE If $\|b_\nu(z)\|_1<2$ then output $h$ and exit
    \ENDWHILE
    \STATE Compute the zeros of $c(z)$ and output the number of zeros
    of modulus less than 1 together with the warning: {\tt `Reached
      the maximum number of iterations'}
    \end{algorithmic}
    \end{algorithm}
 
\subsection{Implementing Newton's iteration}\label{sec:newt}
In this section we analyze the computational issues concerning the
implementation of Newton's iteration applied either to
$f_V(\lambda)=\det \Phi_V(\lambda)$, where $\Phi_V(\lambda)=WV(\lambda)$
in the Vandermonde approach, or to $f_F(\lambda)=\det \Phi_F(\lambda)$,
where $\Phi_F(\lambda)=WU(\lambda)$ in the Frobenius approach. We use
the symbol $\Phi(\lambda)$ to denote either $\Phi_V(\lambda)$ or
$\Phi_F(\lambda)$, similarly we do for $f(\lambda)$.  In all cases,
$\Phi(\lambda)$ is assumed to be a $p\times p$ matrix. This is true if
$q=p$, and also in the case where $ q>p$ when we consider only the
first $p$ rows of $ WV(\lambda)$ or of $WU(\lambda)$.

Since $U(\lambda)=V(\lambda)V_p^{-1}$ then we have
$\Phi_V(\lambda)=\Phi_F(\lambda)V_p$ so that
$f_V(\lambda)=f_F(\lambda)\det V_p(\lambda)$.  We recall that if the
function $f(\lambda)$ has continuous second derivative, then Newton's
method applied to the equation $f(\lambda)=0$, given by
$z_{\nu+1}=z_\nu-f(\lambda_\nu)/f'(\lambda_\nu)$, locally converges to
a zero of $f(\lambda)$. The convergence is at least quadratic if the
zero is simple, it is linear if the zero is multiple.  If
$\Phi(\lambda)$ has entries
with continuous second derivative, then also
$f(\lambda)=\det \Phi(\lambda)$ has
continuous second derivative and for the Newton's correction
$f(\lambda)/f'(\lambda)$ we have
\begin{equation}\label{eq:nc}
f(\lambda)/f'(\lambda)=1/\hbox{trace}(\Phi(\lambda)^{-1}\Phi'(\lambda)).
\end{equation}
A simple calculation shows that
if $\Phi(\lambda)=P(\lambda)Q(\lambda)$ then 
\begin{equation}\label{eq:nc1}
f(\lambda)/f'(\lambda)=1/\left(\hbox{trace}(P(\lambda)^{-1}P'(\lambda))+\hbox{trace}(Q(\lambda)^{-1}Q'(\lambda))\right).
\end{equation}
In particular, since $U(\lambda)=V(\lambda)V_p$, assuming $f_F(\lambda)$ and $f_V(\lambda)$ differentiable, we have
\[
f_F(\lambda)/f'_F(\lambda)=f_V(\lambda)/f'_V(\lambda)+\hbox{trace}(V_p(\lambda)^{-1}V'_p(\lambda)).
\]

\subsubsection{Vandermonde version}
In order to apply Newton's iteration in the Vandermonde version, we
have to assume that the roots $\xi_i(\lambda)$ of the polynomial
$a(z)-\lambda$ have continuous second derivative.  It is well known
that if the coefficients of a polynomial $p_\lambda(z)$ of degree
$\nu$ are analytic functions of $\lambda$, and if for a given
$\lambda_0$ the polynomial has simple roots $\xi_1,\ldots,\xi_\nu$,
then for $\lambda$ in a neighbourhood of $\lambda_0$, there exist
$\xi_1(\lambda),\ldots,\xi_\nu(\lambda)$ analytic functions that are
roots of $p_\lambda(z)$ and $\xi_i(\lambda_0)=\xi_i$, for
$i=1,\ldots,\nu$.  Indeed, the polynomial $z^m(a(z)-\lambda)$ has
coefficients that are analytic for $\lambda\in\mathbb C$, therefore
$\xi_i(\lambda)$ are analytic functions as long as the zeros remain
simple. In this subsection we assume this condition.

In order to compute the Newton correction by means of \eqref{eq:nc} we
need to compute the entries of the Vandermonde matrix
$V(\lambda)$. Therefore we assume we are given a polynomial rootfinder
which approximates the roots of $z^m(a(z)-\lambda) $ so that we may
select the $p$ roots of modulus less than 1. For this task we rely on
the Matlab command {\tt `roots'}. Then we need to compute
$V'(\lambda)$, i.e., the derivative of the entries of $V(\lambda)$.
Concerning this task we have $(\xi_j^ {i})'=i\xi_j^{i-1}
\xi'_j$. Moreover, since $a(\xi_j)-\lambda=0$, taking the derivative
of this equation yields $a'(\xi_j)\xi'_j-1=0$, whence
$\xi'_j=1/a'(\xi_j)$.  Therefore, we are able to implement the Newton
iteration where the Newton correction takes the form \eqref{eq:nc}
with $ (V'(\lambda))_{i,j}=(i-1)\xi_j^{i-2}/a'(\xi_j)$.

\subsubsection{Frobenius version}
Consider the case $\Phi(\lambda)= WU(\lambda)$, where $U(\lambda)$ is
the matrix defined in Section~\ref{sec:basis}. In order to evaluate
the Newton correction, we have to compute the matrix $G$ of minimal
spectral radius which solves the matrix equation \eqref{eq:mateq},
then evaluate, the powers $G^j$ and their derivatives $(G^j)'$, for
$j\ge 0$.

Firstly, we discuss on how to compute $G$. This matrix can be obtained
by the coefficients of the polynomial $s(z)$ collecting the zeros of
$a(z)-\lambda$ of modulus less than 1, that yields the Frobenius
matrix $F$ and in turn $G=F^p$. In our implementation we compute
directly the matrix $G$ as the solution of minimal spectral radius of
equation \eqref{eq:mateq} (compare Theorem \ref{th:mateq}). For this
task, the algorithm of Cyclic Reduction, having a quadratic
convergence, can be effectively applied~\cite{bfgm}.
It is worth pointing out that the
first row of $-G$ contains the coefficients $s_0,\ldots,s_{p-1}$ of
the sought monic factor $s(z)$, so that these coefficients are known
once the matrix $G$ has been computed.

Secondly, we show how to compute the derivative of the coefficients
$s_0,\ldots,s_{p-1}$ of $s(z)$ with respect to $\lambda$.  The
polynomial $z^m (a(z)-\lambda)$ can be factorized as
$z^m(a(z)-\lambda)=s(z)u(z)$, where $u(z)$ has  zeros of
modulus greater than or equal to 1, and $s(z)$ has zeros of modulus less than 1.
Therefore, setting
$\widehat p=m+n-p$, we have the equation
\begin{equation}\label{eq:F}
\small
\begin{bmatrix}
a_{-m}\\\vdots\\a_0-\lambda\\\vdots\\a_n
\end{bmatrix}
=\begin{bmatrix}
u_0\\
u_1&u_0\\
\vdots&\ddots&\ddots\\
u_{\widehat p}&\ddots&\ddots&\ddots\\
         &\ddots&\ddots&\ddots&u_0\\
         &      &\ddots&\ddots&u_1\\
         &      &      &\ddots&  \vdots \\
         &      &      &      &u_{\widehat p}
\end{bmatrix}
\begin{bmatrix}s_0\\s_1\\\vdots\\s_p
\end{bmatrix}
=\begin{bmatrix}
s_0\\
s_1&s_0\\
\vdots&\ddots&\ddots\\
s_p&\ddots&\ddots&\ddots\\
         &\ddots&\ddots&\ddots&s_0\\
         &      &\ddots&\ddots&s_1\\
         &      &      &\ddots&  \vdots \\
         &      &      &      &s_p
\end{bmatrix}
\begin{bmatrix}u_0\\u_1\\\vdots\\u_{\widehat p}
\end{bmatrix}.
\end{equation}
Denote by $U$ and $S$ the two matrices in the above equation and
observe that they have size $(m+n+1)\times (p+1)$ and $(m+n+1)\times
(\widehat p+1)$. Since $s_p=1$ and $u_{\widehat p}=a_n$, then
$s'_p=u'_{\widehat p}=0$.
Set $\bm u=[u_1,\ldots,u_{\widehat p}]^T$, $\bm s=[s_0,\ldots,s_p]^T$.
Taking derivatives with respect to
$\lambda$, and denoting $\bm e_{m+1}$ the vector with null components
except the $(m+1)$-st which is 1, yields the system $ -
\bm e_{m+1}=U\bm s'+S\bm u' $ which can be rewritten as
\begin{equation}\label{eq:sylv}
[\widehat U,\widehat S]
\begin{bmatrix}
\widehat {\bm s}'\\ \widehat {\bm u}'
\end{bmatrix}
=-\bm e_{m+1},
\end{equation}
where $\widehat {\bm s}=[s_0,\ldots,s_{p-1}]^T$, $\widehat
{\bm u}=[u_0,\ldots,u_{\widehat p-1}]^T$, and $\widehat U$ and $\widehat S$
are the matrices obtained from $U$ and $S$, respectively, by removing
the last column and the last row. This is a system formed by $m+n$
equations and $m+n$ unknowns. Moreover, the matrix $[\widehat
  U,\widehat S]$ is invertible since it is a resultant matrix
associated with polynomials having no zeros in common.  Therefore we
have
\[
\begin{bmatrix}
\widehat {\bm s}'\\ \widehat {\bm u}'
\end{bmatrix}=-[\widehat U,\widehat S]^{-1}\bm e_{m+1}.
\]

Thirdly, we explain how to compute $G'$ using the derivatives of
$s_0,\ldots,s_{p-1}$. We rely on the Barnett factorization
\cite{barnett} that provides an LU factorization of the matrix $G=F^p$
\begin{equation}\label{eq:barnett}
\small 
F^p= -\mathcal L^{-1}\mathcal U,\quad
\mathcal L=\begin{bmatrix}
s_p\\ s_{p-1}&s_p\\
\vdots&\ddots&\ddots\\
s_1&\ldots&s_{p-1}&s_p
\end{bmatrix},\quad \mathcal U=\begin{bmatrix}
s_0&s_1&\ldots&s_{p-1}\\
&s_0&\ddots&\vdots\\
&&\ddots&s_1\\
&&&s_0
\end{bmatrix},
\end{equation}
where $\mathcal L$ and $\mathcal U$ are lower triangular and upper
triangular Toeplitz matrices, respectively.  Applying the Barnett
factorization \eqref{eq:barnett} to our problem, we have
\begin{equation}\label{eq:barFp}
(F^p)'=-\mathcal L^{-1}\mathcal U'+\mathcal L^{-1}\mathcal L'\mathcal L^{-1}\mathcal U,
\end{equation}
 where $\mathcal L'$ and $\mathcal U'$ are the derivatives of
 $\mathcal L$ and $\mathcal U$, respectively, that are determined by
 the derivative $s_j'$, $j=0,1,\ldots,p$. We may observe that the cost
 of computing $(F^p)'$ by means of \eqref{eq:barFp} amounts to
 $O(p^3)$ arithmetic operations which, due to the triangular Toeplitz
 structure and to the fast algorithms for triangular Toeplitz matrix
 inversion and for Toeplitz-vector multiplication can be lowered to
 $O(p^2\log p)$.

Finally, we discuss on how to compute $G^j$ and $(G^j)'$ given $G'$.
{}From the relation $G^j=G^{j-1}G$ we obtain
$(G^j)'=(G^{j-1})'G+G^{j-1}G'$. This expression allows us to compute
$(G^i)'$ and $G^i$ for $i=1,\ldots,k$ according to the following
equations
\[
\begin{array}{l}
G^i = G G^{i-1}\\
(G^i)'=(G^{i-1})'G+G^{i-1}G'
\end{array}\quad  i=2,\ldots,k.\\
\]
Clearly, the cost of this computation is $2(k-1)$ matrix
multiplications and $k-1$ matrix additions, for an overall cost of
$2(k-1)p^3+O(kp^2)$ arithmetic operations.

In our implementation, we have adopted the algorithm based on the
Barnett factorization for its simplicity, but other effective
techniques can be used. For instance, a different approach is based on
the structure of $F$ and on the fact that $F'=-\bm e_p \bm s'^T$.
  Indeed, $(F^k)'$ is such that $ (F^k)' =
(F^{k-1})' F+F^{k-1}F',~ F'=-\bm e_p{\bm s'}^T $.  This implies that
\[
(F^k)' = (F^{k-1})' F +\bm f_{k-1}{\bm s'}^T,\quad \bm f_k=F\bm f_{k-1},
\quad k=1,\ldots,p,
\]
where $\bm f_0=-\bm e_p$. A careful computational analysis shows that this
computation can be performed in $O(p^2)$ arithmetic operations.

A slightly different approach can be carried out as follows.
Recall that the last row of $F$ is $-\bm s^T$  and that $F'=-\bm e_p\bm s'^T$.
Given $\bm s$ and $\bm s'$,  write
\[
(F^p)'=-\sum_{\ell=0}^{p-1} F^{p-1-\ell}\bm e_p \bm s'^T F^\ell=-\sum_{\ell=0}^{p-1}
\bm v_{p-1-\ell}\bm s'^T F^\ell,
\]
where $\bm v_\ell\!=\!F^\ell \bm e_p$. Observe that the vector $\bm v_\ell$ is such that
$
\bm v_\ell\!=\!\begin{bmatrix} 0& \ldots & 0 & \sigma_p&\ldots& \sigma_{p-\ell}\end{bmatrix}^T,
$
with $\sigma_p\!=\!1$ and $\sigma_{p-\ell}\!=\!-\sum_{h=1}^{\ell} s_{p-h}\sigma_{p-\ell+h}$, for $\ell=1,\ldots,p-1$. For the rows $\bm r_1^T,\ldots,\bm r_p^T$ of $(F^p)'$ we have
\[
\begin{aligned}
\bm r_\ell^T&=\sigma_{p-\ell+1}\bm s'^T+\bm r_{\ell-1}^TF
\\&=\sigma_{p-\ell+1}\bm s'^T+(F^p)'_{\ell-1,p}\bm s^T+[0\ (F^p)'_{\ell-1,1}\ \cdots (F^p)'_{\ell-1,p-1}],\quad \ell=2,\ldots,p
\end{aligned}
\]
and $\bm r_1^T=\sigma_p\bm s'^T=\bm s'^T$.
The cost of the procedure is given by the computation of $\sigma_1,\ldots,\sigma_p$ that requires $p^2-p$ operations, and the recursion for $\bm r_1^T,\ldots,\bm r_p^T$ that requires about $4p^2$ operations.

\subsubsection{Convergence of Newton's iteration}
We have seen that in the Vandermonde formulation, the function
$f_V(\lambda)$ is holomorphic in $\mathbb C\setminus a(\mathbb T)$
as long as the roots of the Laurent polynomial $a(z)-\lambda$ are simple.
Here we prove that the function $f_F(\lambda)$ is holomorphic in
$\mathbb C\setminus a(\mathbb T)$ under no additional condition. We
rely on the implicit function theorem for functions of complex
variable given in the following form \cite[Theorem 15]{dangelo}.
\begin{theorem}\label{th:implicit}
  Let $F:\mathcal V\subset \mathbb C^{k}\times \mathbb C^{q}\to\mathbb C^q$ be a
  holomorphic mapping such that the linear mapping $\frac{\partial
    F}{\partial w}(z_0,w_0):\mathbb C^q\to\mathbb C^q$ is invertible,
  where $(z_0,w_0)\in\mathcal V$. Then there are neighborhoods
  $\mathcal U$ and $\mathcal A$, $(z_0,w_0)\in\mathcal U$,
  $z_0\in\mathcal A$, and a holomorphic mapping $g:\mathcal
  A\to\mathbb C^q$, such that $F(z,w)=F(z_0,w_0)$ if and only if
  $w=g(z)$ for $(z,w)\in\mathcal U$.
\end{theorem}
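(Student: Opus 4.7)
The plan is to derive the holomorphic implicit function theorem from its real analogue plus a Cauchy--Riemann argument. Identify $\mathbb C^k\cong \mathbb R^{2k}$ and $\mathbb C^q\cong\mathbb R^{2q}$ and view $F$ as a real-analytic mapping $\mathcal V\to\mathbb R^{2q}$. Being holomorphic, the total differential of $F$ at $(z_0,w_0)$, regarded as a real-linear map, decomposes into the complex-linear blocks $\partial F/\partial z$ and $\partial F/\partial w$; there is no $\partial/\partial\bar z$ or $\partial/\partial\bar w$ contribution. The key elementary fact is that a $\mathbb C$-linear map $L:\mathbb C^q\to\mathbb C^q$ has real Jacobian determinant equal to $|\det_{\mathbb C}L|^2$, hence invertibility of $\partial F/\partial w(z_0,w_0)$ as a complex map is equivalent to its invertibility as a real map.

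Next I would invoke the classical real (or $C^\infty$, or real-analytic) implicit function theorem applied to the equation $F(z,w)=F(z_0,w_0)$. This yields open neighborhoods $\mathcal U\ni(z_0,w_0)$ and $\mathcal A\ni z_0$ together with a real-analytic (in particular, smooth) map $g:\mathcal A\to\mathbb C^q$ such that, inside $\mathcal U$, the level set $F(z,w)=F(z_0,w_0)$ coincides with the graph $w=g(z)$. Shrinking $\mathcal A$ if necessary we may assume $\partial F/\partial w(z,g(z))$ remains invertible for every $z\in\mathcal A$, by continuity of the determinant.

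The substantive step, and the main obstacle, is to upgrade smoothness of $g$ to holomorphy. Differentiate the identity $F(z,g(z))=F(z_0,w_0)$ with respect to $\bar z_j$ using the chain rule for Wirtinger derivatives:
\[
0=\frac{\partial F}{\partial \bar z_j}+\sum_{i}\frac{\partial F}{\partial w_i}\,\frac{\partial g_i}{\partial \bar z_j}+\sum_{i}\frac{\partial F}{\partial \bar w_i}\,\frac{\partial \bar g_i}{\partial \bar z_j}.
\]
Since $F$ is holomorphic the first and third terms vanish, leaving $\frac{\partial F}{\partial w}\frac{\partial g}{\partial \bar z_j}=0$. Because $\partial F/\partial w$ is invertible throughout $\mathcal A$, this forces $\partial g/\partial \bar z_j=0$ for every $j$, i.e.\ $g$ satisfies the Cauchy--Riemann system component-wise and is therefore holomorphic on $\mathcal A$.

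To close the argument I would verify the two implications of the \emph{if and only if}. The direction $w=g(z)\Rightarrow F(z,w)=F(z_0,w_0)$ is the defining property of $g$. For the converse, if $(z,w)\in\mathcal U$ with $F(z,w)=F(z_0,w_0)$, then $(z,g(z))$ is another solution in $\mathcal U$; local uniqueness in the real implicit function theorem (which already used invertibility of the Jacobian on a neighborhood) gives $w=g(z)$. Shrinking $\mathcal U$ and $\mathcal A$ once at the beginning so that $\mathcal U$ is contained in the uniqueness neighborhood handles this cleanly. The core novelty over the real statement is entirely in the Cauchy--Riemann step; everything else is bookkeeping and openness.
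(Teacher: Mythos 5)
The paper does not prove this statement; it is quoted verbatim as a known result, citing \cite[Theorem 15]{dangelo}, and is used as a black box. There is therefore no in-paper argument to compare against. Your proof is correct and follows the standard route found in complex-analysis texts: reduce to the real (smooth or real-analytic) implicit function theorem after observing that a $\mathbb C$-linear map $L:\mathbb C^q\to\mathbb C^q$ has real Jacobian determinant $|\det_{\mathbb C}L|^2$, so complex and real invertibility of $\partial F/\partial w$ agree; then upgrade the resulting smooth solution $g$ to a holomorphic one by applying the Wirtinger chain rule to the identity $F(z,g(z))=F(z_0,w_0)$. Since $F$ is holomorphic the terms $\partial F/\partial\bar z_j$ and $\partial F/\partial\bar w_i$ vanish, leaving $(\partial F/\partial w)\,(\partial g/\partial\bar z_j)=0$, and the invertibility of $\partial F/\partial w$ on a (possibly shrunk) neighborhood forces $\partial g/\partial\bar z_j\equiv 0$, i.e.\ $g$ is holomorphic. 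The if-and-only-if is correctly reduced to the local uniqueness in the real implicit function theorem. One may remark that you do not actually need the real-analytic version of the real IFT: the $C^1$ version already supplies a $C^1$ map $g$, and the Cauchy--Riemann argument then yields holomorphy (hence analyticity) for free.
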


Observe that for $\lambda\in\Omega$ the winding number of
$a(z)-\lambda$ is constant, where $\Omega$ is a connected component of
$\mathbb C \setminus a(\mathbb T)$. Therefore, the polynomial
$z^m(a(z)-\lambda)$ has $p=m+w$ roots of modulus less than 1 and
$\widehat p=m+n-p$ roots of modulus greater than 1. Thus, there exists
the Wiener-Hopf factorization $z^m(a(z)-\lambda)=s(z)u(z)$, where $s(z)$ is
the monic polynomial of degree $p$, with coefficients $s_i$,
$i=0,\ldots,p$, having roots of modulus less than 1, while $u(z)$, of
degree $\widehat p$ and coefficients $u_i$, $i=0,\ldots,\widehat p$,
has roots of modulus greater than 1.  Consider the function
$F(\lambda;s_0,\ldots,s_{p-1},u_0,\ldots,u_{\widehat p})=Us-\widehat
a=Su-\widehat a$, where $s=(s_0,\ldots,s_{p-1},1)^T$,
$u=(u_0,\ldots,u_{\widehat p})^T$, $\widehat
a=(a_{-m},\ldots,a_{-1},a_0-\lambda,a_1,\ldots, a_n)^T$, and where the
matrices $U$ and $S$ are defined in \eqref{eq:F}.  The function $F$ is
defined in $\mathbb C\times\mathbb C^{m+n+1}$ and takes values in
$\mathbb C^{m+n+1}$. A direct computation shows that the matrix of
partial derivatives of $F$ with respect to $s_i$ and to $u_j$ is given
by $[\widetilde U,S]$, where $\widetilde U$ is the matrix obtained by
removing the last column of $U$. This matrix is invertible since its
last row is $[0,\ldots,0,1]$ and the leading principal submatrix of
size $m+n$ coincides with $[\widehat U,\widehat S]$ in equation
\eqref{eq:sylv} that is invertible.

Therefore, we may apply Theorem \ref{th:implicit} to the function $F$
with $k=1$, $q=m+n+1$, where $F(z_0,w_0)=0$, and conclude with the
following result.
\begin{theorem}
  Let $\Omega$ be any connected component of $\mathbb C\setminus
  a(\mathbb T)$. Then, for $\lambda\in\Omega$ the function
  $f_F(\lambda)=\det \Phi_F(\lambda)$ is holomorphic.
\end{theorem}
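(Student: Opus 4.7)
The plan is to show that the coefficients $s_0,\ldots,s_{p-1}$ of the ``stable'' Wiener--Hopf factor of $z^m(a(z)-\lambda)$ depend holomorphically on $\lambda\in\Omega$, transfer this to the matrix $G$ and to $\Phi_F(\lambda)=WU(\lambda)$ by the explicit polynomial formulas already exhibited in the paper, and finally pass to the determinant.

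First I would fix an arbitrary $\lambda_0\in\Omega$. Since $\lambda_0\notin a(\mathbb T)$, the Laurent polynomial $a(z)-\lambda_0$ has no root on $\mathbb T$, and Proposition \ref{prop:1} combined with the constancy of $\mathrm{wind}(a-\lambda)$ on $\Omega$ shows that $z^m(a(z)-\lambda_0)$ has exactly $p=m+w$ roots inside the open unit disk and $\widehat p=m+n-p$ outside. This yields a \emph{unique} Wiener--Hopf factorization $z^m(a(z)-\lambda_0)=s^{(0)}(z)u^{(0)}(z)$ with $s^{(0)}$ monic of degree $p$ whose roots all lie inside the disk. The associated coefficient vectors $s^{(0)},u^{(0)}$ satisfy $F(\lambda_0;s^{(0)},u^{(0)})=0$ for the polynomial (hence entire) function $F$ introduced just above the statement.

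Next I would apply the holomorphic implicit function theorem (Theorem \ref{th:implicit}). The Jacobian of $F$ with respect to the variables $(s_0,\ldots,s_{p-1},u_0,\ldots,u_{\widehat p})$ at $(\lambda_0,s^{(0)},u^{(0)})$ is the $(m+n+1)\times(m+n+1)$ matrix $[\widetilde U,S]$; its last row is $[0,\ldots,0,1]$ because of the normalization $s_p=1$, and its leading $(m+n)\times(m+n)$ submatrix is precisely the resultant matrix $[\widehat U,\widehat S]$ from \eqref{eq:sylv}, invertible because $s^{(0)}$ and $u^{(0)}$ have no common roots. Theorem \ref{th:implicit} then provides a neighbourhood of $\lambda_0$ on which $s_i(\lambda)$ and $u_j(\lambda)$ are holomorphic functions of $\lambda$. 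Because on all of $\Omega$ the splitting into roots inside and outside the disk is preserved, the local branches must coincide with the globally defined ``stable'' coefficients, and therefore $s_0(\lambda),\ldots,s_{p-1}(\lambda)$ are holomorphic on the whole of $\Omega$.

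Finally, the companion matrix $F$ is a polynomial expression in $s_0,\ldots,s_{p-1}$, hence so are $G=F^p$ and all its powers $G^j$. Consequently the truncation of $[I;G;G^2;\ldots]$ that defines $U(\lambda)$ is holomorphic on $\Omega$; multiplication by the constant matrix $W$ preserves holomorphy, and so does the determinant of the resulting $p\times p$ matrix. This gives the holomorphy of $f_F(\lambda)=\det\Phi_F(\lambda)$ on $\Omega$. The main obstacle I anticipate is the global gluing step, namely verifying that the local branch produced by the implicit function theorem is the Wiener--Hopf ``stable'' branch on every overlap; this is exactly where the assumption $\Omega\subset\mathbb C\setminus a(\mathbb T)$ (no roots on $\mathbb T$, hence uniqueness of the inside/outside splitting) is essential.
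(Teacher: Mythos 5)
Your argument is correct and follows essentially the same route as the paper: you set up the polynomial map $F(\lambda;s,u)=Us-\widehat a$ coming from the Wiener--Hopf factorization, verify the invertibility of the Jacobian $[\widetilde U,S]$ via the resultant structure, and invoke the holomorphic implicit function theorem. You spell out a bit more explicitly the global gluing of local branches and the final passage from $s_i(\lambda)$ through $G=F^p$ to $\det(WU(\lambda))$, but these are precisely the (implicit) steps the paper also relies on.
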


\section{Choosing the initial approximation}\label{sec:init}
The algorithms presented in the previous sections can be used for
refining a given approximation to an isolated eigenvalue of a QT
matrix $A$, once an initial approximation is available. In this
section, we investigate the problem of determining initial
approximations to each isolated eigenvalue of $A$. More specifically,
we show that, if $A$ is Hermitian then for any isolated eigenvalue
$\lambda$ of $A$, and for any $\epsilon>0$ there exists an integer $N$
and an eigenvalue $\mu$ of the $N\times N$ leading principal submatrix $A_N$ 
(finite section) of $A$
such that $|\lambda-\mu|\le\epsilon$. That is, for each isolated
eigenvalue $\lambda$ of $A$ we may find a sufficiently close
approximation to $\lambda$ among the eigenvalues of the $N\times N$
matrix $A_N$ for a sufficiently large value of $N$.

For non-Hermitian matrices we have a weaker result: we
show that for any eigenvalue $\lambda$ of $A$ and for any positive
$\epsilon$, there exists $N_0>0$ such that for any $N\ge N_0$
$\lambda$ belongs to  the
$\epsilon$-pseudospectrum $\hbox{sp}_\epsilon$ of $A_N$  defined as
$\hbox{sp}_\epsilon(A_N)=\{z\in\mathbb C: \quad \|(A_N-z I)^{-1}\|\ge
\epsilon^{-1}\}$.

This fact enables us to implement a heuristic approach that, given
$A$, selects a sufficiently large value of $N$, computes all the
eigenvalues of $A_N$ and applies to each eigenvalue of $A_N$ one of
the fixed-point methods described in the previous section, and finally
selects the values for which the numerical convergence occurs.

Since we do not have an explicit formal relation between $\epsilon$
and $N$, and since we do not have a theoretical bound to the radius of
the convergence neighborhood of Newton's iteration, this strategy
remains a heuristics approach. Nevertheless, from our implementation
and from the experiments that we performed, this strategy turns out to
be practically effective.

\subsection{The case of Hermitian matrices}
If $A$ is Hermitian then the Bauer-Fike theorem provides a helpful
tool to show that the isolated eigenvalues of $A$ can be approximated
by the eigenvalues of $A_N$.

Let $A=T(a)+E$ be a QT matrix, $a(z)=\sum_{j=-m}^na_jz^j$, $E$ compact
correction with support $h_1\times h_2$, i.e., its entries outside the
leading $h_1\times h_2$ submatrix are zero.  Let $A_N$ be the $N\times
N$ leading principal submatrix of $A$.  Let $A\bm v=\lambda\bm v$ be
such that $\lambda$ is an isolated eigenvalue of $A$ and $\bm v=(v_i)$
has exponential decay, i.e., $\lim_j |v_j|^\frac1j = \xi$ for
$0<\xi<1$, and $\sum_j|v_j|^2=1$. Denote $Y\in\mathbb C^{n\times n}$
the lower triangular Toeplitz matrix with first column
$(a_n,\ldots,a_1)^T$. Due to the exponential decay of $v_i$, for any
$\epsilon>0$ there exists $N_0>0$ such that for any $N\ge N_0$ it
holds that $\|Y \bm w_N\|\le \|Y\|\,\|\bm w_N\|\epsilon$, where $\bm
w_N=(v_{N+1},\ldots,v_{N+n})^T$.

If $N>\max(m,n,h_1,h_2, N_0)$ set $\bm v_N=(v_1,\ldots,v_N)^T$,
$\bm u_N=[0_{N-n}; Y\bm w_N]$
 and rewrite
 the condition  $A\bm v=\lambda\bm v$ as 
\begin{equation}\label{eq:bf}
A_N \bm v_N+\bm u_N=\lambda \bm v_N.
\end{equation}
Defining $C_N=\frac 1{\bm v_N^*\bm v_N} \bm u_N\bm v_N^*$, we may
rewrite \eqref{eq:bf} as $ (A_N+C_N)\bm v_N=\lambda\bm v_N $.  That
is, $\lambda$ is eigenvalue of an $N\times N$ matrix which differs
from $A_N$ by the correction $C_N$. Observe also that the matrix $C_N$ satisfies the inequality
$\|C_N\|\le\frac1{\|\bm v_N\|}\|Y\|\cdot\|\bm w_N\|\le\frac1{\|\bm
  v_N\|}\epsilon$.

That is, we may look at an isolated eigenvalue $\lambda$ of $A$ as an
eigenvalue of a finite matrix obtained by perturbing the finite matrix
$A_N$. Therefore we may invoke the classical perturbation theorems for eigenvalues  of
finite matrices.  For instance we can apply the Bauer-Fike theorem.

\begin{theorem}[Bauer-Fike]
    Let $A$ be a diagonalizable matrix, i.e., there exists $S$ such
    that $S^{-1}AS=D$, $D$ diagonal, and let $\|\cdot\|$ be an
    absolute norm. Then, for any eigenvalue $\lambda$ of $A+C$ there
    exists an eigenvalue $\mu$ of $A$ such that $|\lambda-\mu|\le\|C\|
    \cdot \|S\|\cdot \|S^{-1}\|$.
\end{theorem}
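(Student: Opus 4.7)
The plan is to reduce to a singularity statement and then exploit the diagonalization of $A$. Let $\lambda$ be an eigenvalue of $A+C$. If $\lambda$ already coincides with an eigenvalue of $A$, the inequality is trivial with $\mu=\lambda$, so I may assume $A-\lambda I$ is invertible. Since $A+C-\lambda I$ is singular, there exists $\bm v\ne 0$ with $(A+C-\lambda I)\bm v=0$, which rearranges to $(A-\lambda I)\bm v=-C\bm v$, i.e., $\bm v=-(A-\lambda I)^{-1}C\bm v$. Taking norms and dividing by $\|\bm v\|$ yields the key estimate
\[
1\le \|(A-\lambda I)^{-1}\|\cdot \|C\|.
\]

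Next I would invoke the diagonalization. Writing $A=SDS^{-1}$ gives $A-\lambda I=S(D-\lambda I)S^{-1}$, hence $(A-\lambda I)^{-1}=S(D-\lambda I)^{-1}S^{-1}$, so by submultiplicativity
\[
\|(A-\lambda I)^{-1}\|\le \|S\|\cdot\|S^{-1}\|\cdot\|(D-\lambda I)^{-1}\|.
\]
Here I would use the defining property of an absolute norm: for any diagonal matrix $D-\lambda I=\operatorname{diag}(d_1-\lambda,\ldots,d_N-\lambda)$ the induced operator norm satisfies $\|(D-\lambda I)^{-1}\|=1/\min_j|d_j-\lambda|$. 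Substituting into the previous chain of inequalities gives
\[
\min_j |d_j-\lambda|\le \|S\|\cdot\|S^{-1}\|\cdot\|C\|,
\]
and choosing $\mu$ to be the eigenvalue $d_j$ attaining the minimum yields $|\lambda-\mu|\le \|C\|\cdot\|S\|\cdot\|S^{-1}\|$, as desired.

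The only non-mechanical ingredient is the identity $\|(D-\lambda I)^{-1}\|=1/\min_j|d_j-\lambda|$ for absolute norms, which is the point where the "absolute" hypothesis is really used (monotonicity of the norm on componentwise absolute values forces the operator norm of a diagonal matrix to equal its largest diagonal modulus). Everything else is a routine manipulation of the singularity condition and the similarity $A=SDS^{-1}$. I would expect no substantial obstacle: the proof is a short half-page once these steps are written out, and the argument is completely independent of the QT setting — it is invoked merely as a black-box perturbation bound to combine with the estimate $\|C_N\|\le \epsilon/\|\bm v_N\|$ derived just before the statement.
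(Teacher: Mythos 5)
The paper does not prove the Bauer--Fike theorem; it states it as a classical result (attributing it implicitly to the standard literature) and applies it as a black box to the perturbed matrix $A_N+C_N$. So there is no in-paper proof to compare against. Your argument is the standard textbook proof of Bauer--Fike, and it is correct: the reduction to $1\le\|(A-\lambda I)^{-1}\|\,\|C\|$ via the singularity of $A+C-\lambda I$, the similarity $A-\lambda I=S(D-\lambda I)S^{-1}$, and the identity $\|(D-\lambda I)^{-1}\|=1/\min_j|d_j-\lambda|$ for the operator norm induced by an absolute vector norm all go through. The one ingredient you flag as "non-mechanical" --- that an absolute vector norm induces a matrix norm for which a diagonal matrix has operator norm equal to its largest diagonal modulus --- is indeed exactly where the absoluteness hypothesis is used; it rests on the classical fact (Bauer--Stoer--Witzgall) that a vector norm is absolute if and only if it is monotone, giving $\|D\bm x\|=\|(|d_i||x_i|)_i\|\le(\max_j|d_j|)\|\bm x\|$ together with the lower bound from testing on the coordinate vector attaining the max. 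Your proof would be a valid self-contained replacement for the citation.
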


Observe that the $p$-norms are absolute, i.e., $\|\bm v\|=\|(|v_i|)\|$
for any $\bm v=(v_i)$.

Therefore, if $A$ is Hermitian, then $A_N$ is Hermitian and
consequently $S$ can be chosen to be unitary so that for the 2-norm we have
$\|S\|=\|S^{-1}\|=1$ and by the Bauer-Fike theorem we may conclude
that for any eigenvalue $\lambda$ of $A_N+C_N$, that is for any
isolated eigenvalue $\lambda $ of $A$, there exists an eigenvalue
$\lambda_N$ of $A_N$ such that $|\lambda-\lambda_N|\le
\|C_N\|\le\epsilon/\|\bm v_N\|$. Therefore, $|\lambda_N-\lambda|\to 0$
exponentially with $N$.

\subsection{The general case}
The case of nonsymmetric matrices seems more tricky. In fact, the
Bauer-Fike theorem can be still applied if $A_N$ is diagonalizable but the bound turns into
\[
|\lambda_N-\lambda|\le
\frac{\|\bm w_N\|}{\|\bm v_N\|}
\|Y\|\cdot \|S_N\| \cdot \|S_N^{-1}\|
\]
where $S^{-1}_NA_NS_N=D$ is a diagonal matrix. Therefore, in this case
we need that $A_N$ be diagonalizable and that $\lim_N \| \bm
w_N\|\cdot \|S_N\|\cdot \|S_N^{-1}\|=0$. This condition is satisfied
if, say, the condition number $\|S_N\|\cdot \|S_N^{-1}\|$ is uniformly
bounded from above by a constant.

Unfortunately, the condition number of $S_N$ may grow very fast with
$N$. Think for instance to the tridiagonal matrix
$\hbox{trid}(1/2,0,2)=\widehat D^{-1}\hbox{trid}(1,0,1)\widehat D$
where $\widehat D=\hbox{diag}(1, 2, 2^2,\ldots,2^{N-1})$, having
$S_N=Q_N\widehat D$ as eigenvector matrix with $Q$ orthogonal. Clearly
$\hbox{cond}(S_N)=\hbox{cond}(\widehat D)=2^{N-1}$.

On the other hand, from \eqref{eq:bf} we find that if $\lambda$ is not
eigenvalue of $A_N$, then $(A_N-\lambda I)^{-1}\bm u_N=-\bm v_N$, that
is, $\|(A_N-\lambda I)^{-1}\|\ge \|\bm v_N\|/\|\bm u_N\|\ge \gamma
\epsilon^{-1}$ for some constant $\gamma>0$. This implies that
$\lambda\in\hbox{sp}_{\gamma^{-1} \epsilon}(A_N)$ for any $N>N_0$.

Therefore, we may say that for any eigenvalue $\lambda$ of the QT
matrix $A$ and for any $\epsilon>0$ there exists an integer $N_0$ such
that for any $N\ge N_0$ the matrix $A_N$ has an $\epsilon$-pseudo
eigenvalue $\mu$ equal to $\lambda$.  This fact motivates using the
eigenvalues of $A_N$, for sufficiently large values of $N$, as
starting approximations for Newton's iteration.

\section{Implementation and numerical results}\label{sec:exper}
We have implemented the algorithms described in the previous sections
in Matlab and added them to the CQT-Toolbox of \cite{bmr}.   The
functions allow the computation in high precision arithmetic relying
on the package {\tt Advanpix}, see \url{https://advanpix.com}.
The main functions are \verb+eig_single+ and \verb+eig_all+.  The function
\verb+eig_single+ computes the approximation of a single eigenvalue by relying on Newton's iteration, in both the
Vandermonde and the Frobenius version, starting from a given
approximation $\lambda_0$. The function \verb+eig_all+ computes
approximations to all the eigenvalues starting from the eigenvalues of
the matrix $A_N$ for $N=\gamma\max(h_1,h_2,m+n)$, the value of
$\gamma$ can be optionally changed, by default $\gamma=3$.  The
iterations are halted if the modulus of the difference between two
subsequent approximations is less than $10^3 u$, where $u$ is the
machine precision, and if this value is not less than the value
obtained at the previous step. After that the halting condition is
satisfied, a further Newton step is applied to refine the
approximation. The iterations are halted with the failure flag if
$\hbox{wind}(\lambda_k) \neq \hbox{wind}(\lambda_{k-1})$ for some $k$
or if $|\lambda_k|$ is larger than $\|A\|_\infty$ or if the maximum
number of 20 iterations has been reached.  More information, together
with the description of other auxiliary functions and optional
parameters, can be found at \url{https://numpi.github.io/cqt-toolbox},
while the software can be downloaded at
\url{https://github.com/numpi/cqt-toolbox}.

\subsection*{The tests}
We have performed several tests to validate our
algorithms. Here, we describe the results of the most meaningful
ones. In the following, we denote by {\tt am} and {\tt ap} two vectors
such that ${\tt am}=[a_0,a_{-1},\ldots,a_{-m}]$ and ${\tt
  ap}=[a_0,a_1,\ldots,a_n]$, where $a(z)=\sum_{i=-m}^n a_iz^i$ is the
Laurent polynomial associated with the QT matrix $A=T(a)+E$. We refer to Algorithm
V for the Vandermonde approach and Algorithm F for the Frobenius
approach. The tests have been run on a laptop with Intel I5 CPU and with Matlab version R2021b.

In Test 1 we have set $m=3$ and $n=2$, where ${\tt am}=[0,-1,1,-1]$, ${\tt
  ap}=[0,-1,-1]$.  We have applied two kinds of corrections, namely,
the $20\times 100$ matrix
$E_2$ having null entries except the last column which is equal to
$[1,2,3,\ldots,20]^T$, and the $3\times 100$ matrix $E_1$ having null entries except in the last
column which is equal to $8[1,2,3]^T$.
We refer to these two corrections as Case 1 and Case 2, respectively.

In Test 2 we have set $m = 7$ and $n = 2$
where ${\tt am}\!=\![0, -1, 1, -1, 0, 0, 0, 1]$, ${\tt ap}=[0,-1,-1]$.  We
have applied two kinds of corrections, namely, $E_1$ and $E_2$, where
$E_1$ is the
same as in Test 1, while $E_2$ has size $7\times 100$ with null entries except the last column
which is equal to $8[1,2,3,\ldots,7]^T$.
We refer to these two corrections as Case 1 and Case 2, respectively.

Test 3 has been designed in order to show that the Vandermonde
approach may strongly suffer of numerical instability when the
characteristic equation $a(z)-\lambda=0$ has some clustered roots
that, consequently, are ill conditioned. For this test, we have
constructed $a(z)$ in terms of a Mignotte-like polynomial
\cite{mignotte}. 
More precisely, we set $a(z)=z^{-m}b(z)$, where $b(z)$ is of the form
$b(z)=(10^{-1}+z)^3+10z^{n+m}$. This polynomial has a very tight
cluster of 3 zeros close to $10^{-1}$. In our test we set $m=10$ and
$n=2$ and $E=10^{-5} [0_{12}, I_{12}]$.

In all the three tests the matrix $A$ is not symmetric.

\subsection*{Details on the implementation}
The algorithms have been applied
in the double precision floating point arithmetic.  The basins of
attraction have been constructed as follows. A generic point in the
picture, corresponding to the complex number $\lambda_0$ has been coloured
with a colour depending on the limit of the sequence generated by
fixed point iteration $\lambda_{k+1}=g(\lambda_k)$ for $k\ge 0$. Different colours, randomly
generated, have been used for different limits. Different levels of
gray have been used to denote that the iteration has been halted with
no convergence. The colour green has been used for the values $\lambda_0$
belonging to a continuous set of eigenvalues.

\subsection*{The results}
In the figures where the eigenvalues are reported,
red circles indicate the
eigenvalues of the finite section $A_N$, blue
dots represent isolated eigenvalues of $A$, while red circles containing a green dot represent eigenvalues of $A_N$ that belong to a continuous set of eigenvalues. The light blue curve denotes the set $a(\mathbb T)$.
 In the figures displaying the basins of attraction, the light green area indicates a continuous set of eigenvalues.
For this set of figures, Algorithm V has been applied.

Figure \ref{fig:1_1} displays the eigenvalues of
$A+E_1$, for the matrix of Test 1, and the basins of attraction of
Newton's iteration, together with a zoom of a specific area.  

Figure \ref{fig:1_2} displays the analogous images for the matrix
$A+E_2$ of Test 1.   Here, it is interesting to observe the existence of
a connected component formed by a continuous set of eigenvalues
denoted by a green triangle-shaped figure. Observe also that the corresponding red circles in this component contain a
green dot.

\begin{figure}
\centering
\includegraphics[scale=0.34]{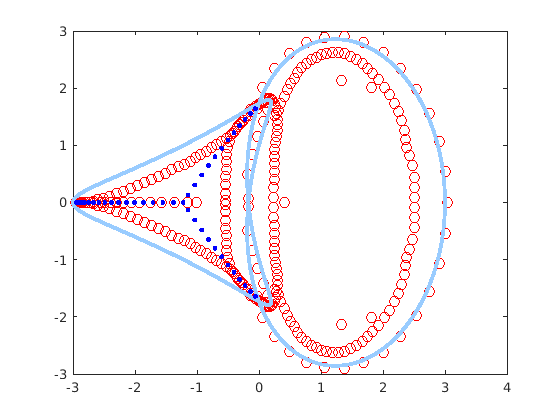}~ 
\includegraphics[scale=0.34]{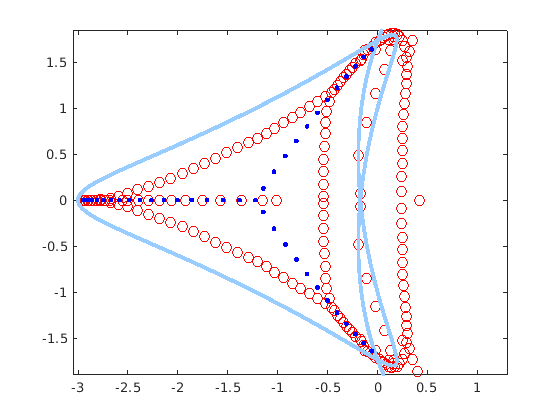}\\ 
\includegraphics[scale=0.34]{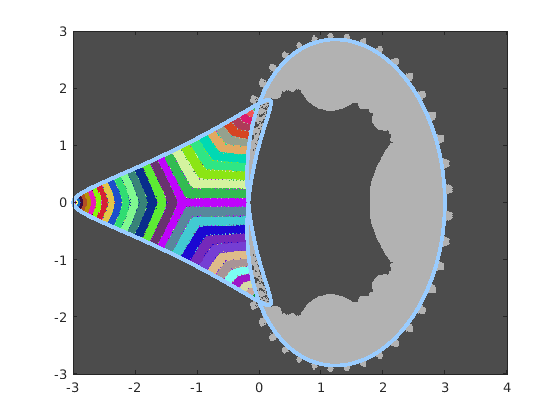}~
\includegraphics[scale=0.34]{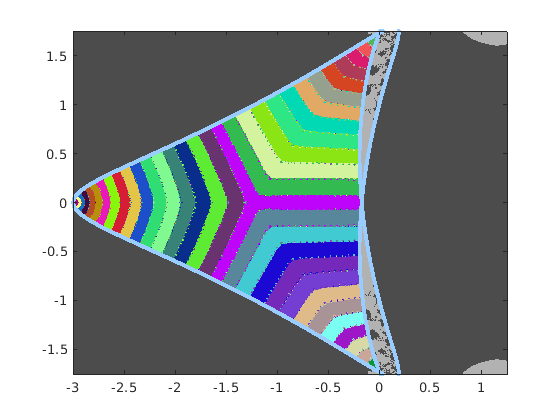}
\caption{\footnotesize
  Test 1, Case 1: Eigenvalues of the QT matrix $A$ (blue dots) and of
  the finite section $A_N$ (red circles), together with the basins of
  attraction for Newton's iteration computed by Algorithm V. On the right the zoom of a
  portion. }\label{fig:1_1}
\end{figure}

\begin{figure}
\centering
\includegraphics[scale=0.34]{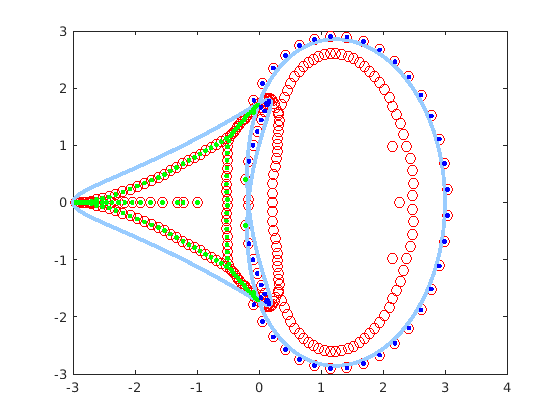}~
\includegraphics[scale=0.34]{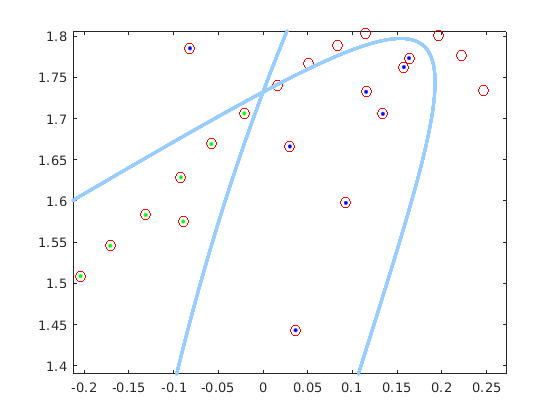}\\ 
\includegraphics[scale=0.34]{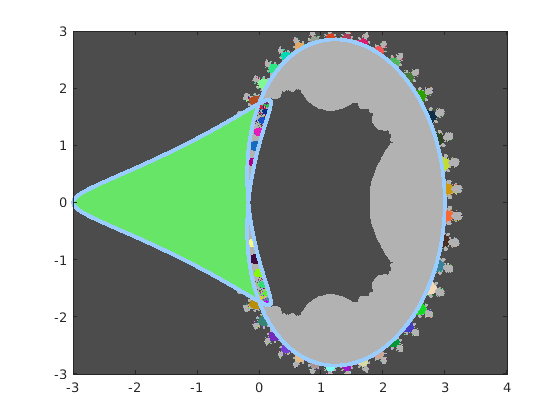}~
\includegraphics[scale=0.34]{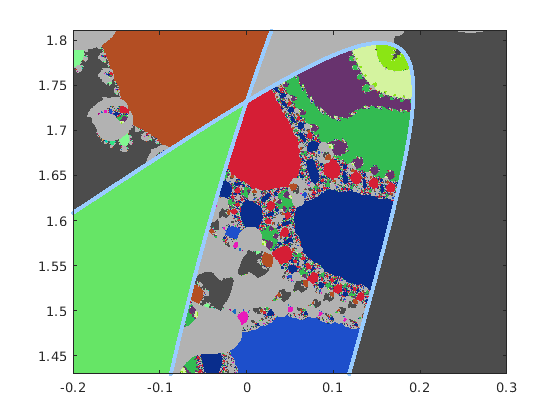}
\caption{\footnotesize
  Test 1, Case 2: Eigenvalues of the QT matrix $A$ (blue dots) and of
  the finite section $A_N$ (red circles), together with the basins of
  attraction for Newton's iteration computed by Algorithm V. On the right the zoom of a
  portion. }\label{fig:1_2}
\end{figure}

The smallest value of $N_0$ for which the number of computed
eigenvalues is constant for $N\ge N_0$ is $N_0=400$ for
the Case 1, while it is $N_0=200$ for the Case 2.  In both cases, the
geometry of the basins of attraction, together with the distribution of
the eigenvalues of $A_N$, explains why Newton's iteration converges to
all the eigenvalues, when starting from the eigenvalues of $A_N$ for a
quite small value of $N$, even though the latter eigenvalues are far
from the eigenvalues of $A$. This latter property is more evident in
 Case 1, where several blue dots are not contained inside red circles, see
Figure \ref{fig:1_1}, zoomed part.

The number of iterations to arrive at convergence is quite small and is the same for both algorithms.
Namely, concerning Case 1, it ranges from 3 to 18 with the avergae value of 7.5; concerning Case 2, it ranges from 3 to 10 with average 3.3. 

Another interesting issue to investigate, independently of the
algorithm used, is to analyze how large must be $N$ in order that the
eigenvalues of $A_N$ approximate all the eigenvalues of $A$ within the machine precision $u=\tt 2.22e-16$ so that no step of Newton's
iteration would be necessary. It turns out that for the Test 1, Case
1, almost all the eigenvalues are well approximated already for $N=800$, while
there are few eigenvalues that require a pretty larger size. Table
\ref{tab:dist1_1} shows a few significant cases. Typically, the
eigenvalues closest to the light blue curve are the ones that need a
large value of the truncation level $N$ to be properly approximated by
a corresponding eigenvalue of $A_N$.  For instance, from Table
\ref{tab:dist1_1} it turns out that $N=3200$ is not enough to
approximate the rightmost eigenvalue. Even $N=6400$ does not provide a
full accuracy approximation.  A similar situation holds for the Case
2.

\begin{table}\centering
\begin{tabular}{c|cccccc}
$\lambda~\backslash~ N$                   & 200   & 400   &  800    & $1600$ & $3200$ & $6400$ \\ 
\hline
-4.0e-01$\pm$1.2e+00i & 4.1e-04 & 1.3e-08 &   -- &                      
\\
-3.1e-01$\pm$1.3e+00i & 3.0e-03 & 3.9e-06 &  5.3e-12  & --                        
\\
-2.2e-01$\pm$1.5e+00i & 5.4e-03 & 6.1e-05 &  5.9e-09  & --                           
\\
-1.4e-01$\pm$1.6e+00i & 2.6e-02 & 2.0e-03 &  2.7e-05  & 1.9e-10        & --           
\\
-5.9e-02$\pm$1.6e+00i & 6.5e-02 & 3.5e-02 &  1.1e-02  & 3.5e-04        & 9.2e-07        & 3.8e-13      
\end{tabular}\caption{\footnotesize Test1, Case 1: Distances of some eigenvalues of $A$ from the closest eigenvalue of $A_N$ for different values of $N$. A ``--'' denotes a value below 1.e-15.}
\label{tab:dist1_1}\end{table}

Concerning the accuracy of approximation, Figure \ref{fig:errors1}
shows the relative errors of approximating the eigenvalues of $A$ with
the Vandermonde approach (blue circle) and with the Frobenius approach
(red cross) in the two cases of Test 1. Here, the eigenvalues have
been sorted according to the real part.  The relative errors have been
obtained by comparing the eigenvalues computed in the standard
floating point arithmetic with those computed in the quadruple
precision relying on Advanpix.  Observe that the results obtained by
the Frobenius version are generally more accurate than the ones
obtained with the Vandermonde version.

\begin{figure}\centering
\includegraphics[scale=0.35]{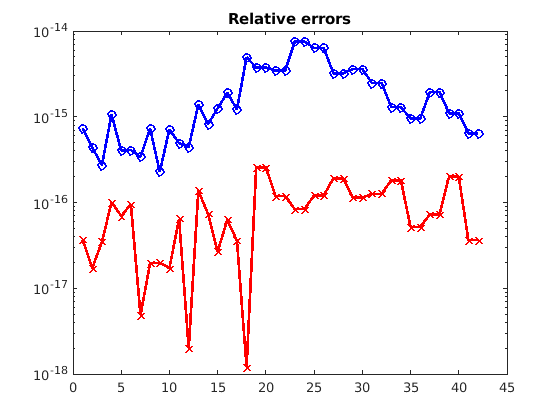}
\includegraphics[scale=0.35]{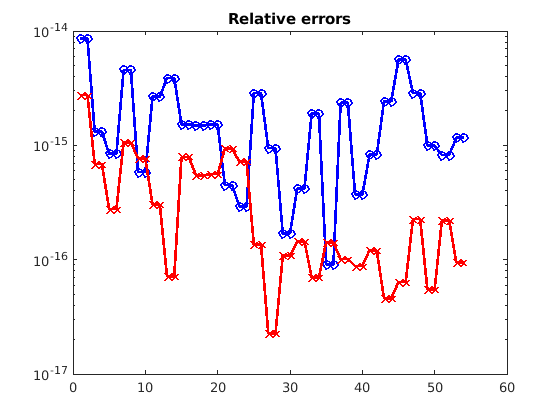}
\caption{\footnotesize Test 1. Relative errors in each eigenvalue computed with Algorithm V (blue circle) and with Algorithm F (red cross). Case 1 and case 2 on the left and on the right, respectively. Eigenvalues are sorted with respect to the real part.}\label{fig:errors1}
\end{figure}

Finally, concerning the CPU time, the two algorithms have similar
performances even though, for this test, Algorithm F generally
requires a double time.

Test 2, Case 1, points out in a more evident manner that the eigenvalues of $A$
which are close to the curve $a(\mathbb T)$ can be hardly approximated
by the eigenvalues of a finite section $A_N$ of $A$, unless $N$ is
extremely large. In fact, as clearly shown in Figure \ref{fig:2} 
and in the zoomed areas, out of the 8 eigenvalues of $A$, there is a
group of few eigenvalues that lie very close to the light blue
curve. In particular, the second (from the left) eigenvalue and the
last one.  The distances of these eigenvalues to the closest
eigenvalue of $A_N$ for different values of $N$ are reported in Table
\ref{tab:d2}. It turns out that in order to approximate such
eigenvalues within the machine precision $u$ without applying Newton's
iteration, one would need truncation levels larger than $1.6$
millions, whereas Newton's iteration converges quickly just starting
from the eigenvalues of $A_N$, with $N=3200$.

\begin{figure}
\centering
\includegraphics[scale=0.34]{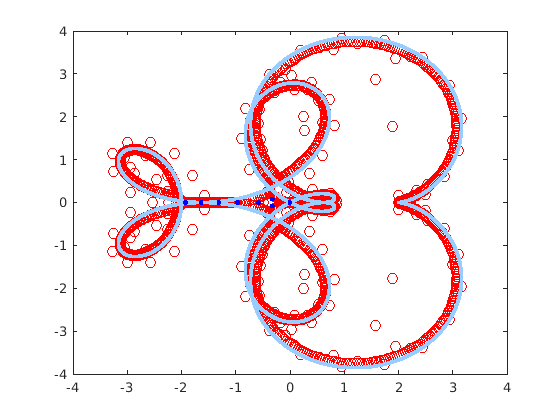}~ \includegraphics[scale=0.34]{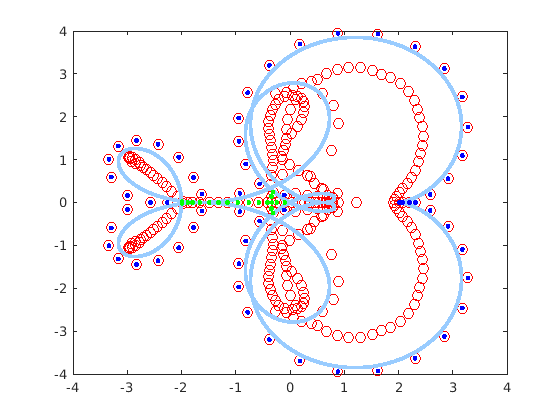}\\
\includegraphics[scale=0.33]{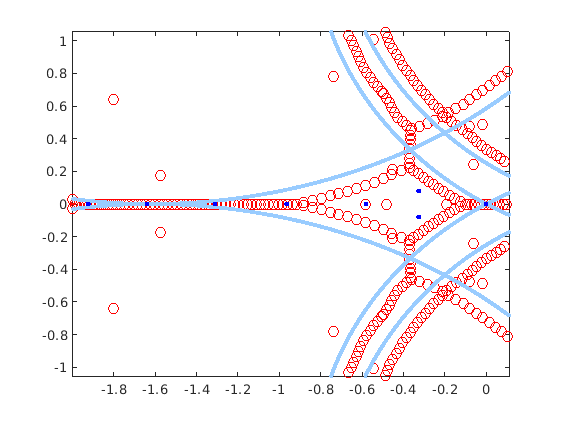}\includegraphics[scale=0.33]{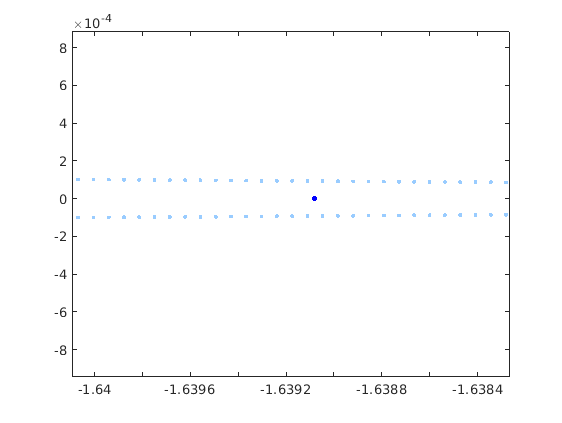}
\includegraphics[scale=0.33]{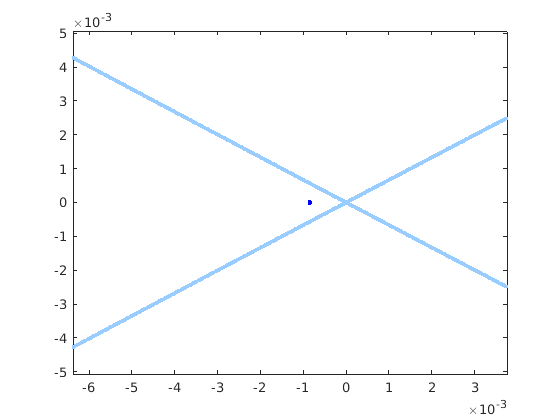}
\caption{\footnotesize
  Test 2, In the first line the eigenvalues of Case 1 (left) and Case 2 (right) are displayed with a blue dot. In the second line, some portion of the domain where the eigenvalues of Case 1 are located are displayed; more specifically, from
  the left, the area with all the eigenvalues, the second leftmost eigenvalue, and the last rightmost
  eigenvalue are zoomed, respectively. }\label{fig:2}
\end{figure}


Also in this test, the number of iterations required by Algorithm V
and Algorithm F is the same. Namely, for Case 1 it ranges between 5 and 12 with average value 7.25, for Case 2 it ranges between 2 and 4 with average value 3.0. Algorithm F turns out to be
more accurate than Algorithm V as shown in Figure
\ref{fig:errors2}.

\begin{figure}\centering
\includegraphics[scale=0.35]{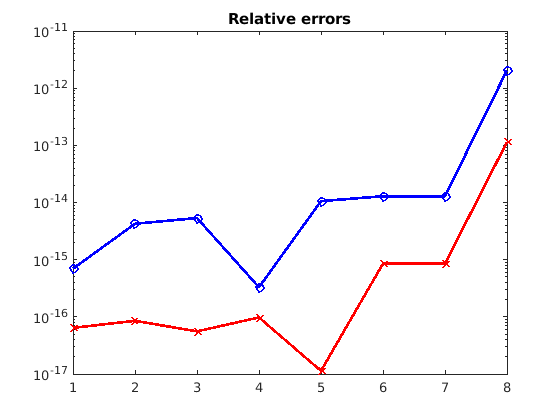}~ 
\includegraphics[scale=0.35]{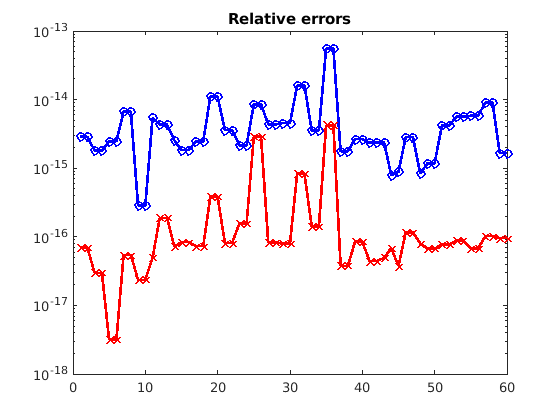}
\caption{\footnotesize Test 2.
  Relative errors in each eigenvalue computed with Algorithm V
  (blue circle) and with Algorithm F (red cross). Case 1 and case 2 on the left and on the right, respectively. Eigenvalues are
  sorted with respect to the real part.}\label{fig:errors2}
\end{figure}

\begin{table}\centering
\begin{tabular}{c|ccccccc}
$\lambda~\backslash~ N$    & $400$ & $1600$ & $6400$ & $25600$ & $102400$ & $409600$& $1638400$\\
\hline
-1.9   &1.4e-01 &  8.3e-02 &  4.1e-05  & -- \\ 
-1.6   &7.6e-01 &  2.3e-01 &  9.8e-02  & 5.8e-02 &  2.9e-03 &  2.6e-03 &  1.4e-07\\
-1.3   &4.2e-01 &  1.2e-01 &  1.5e-04  & -- \\ 
-9.6e-01   &1.6e-01 &  5.2e-06 &  --\\ 
-5.8e-01   &3.0e-03 &  7.3e-11 &    --    \\
-8.5e-04   &6.8e-02 &  1.0e-01 &  9.2e-02 &  7.8e-03 &  2.9e-04 &  5.3e-13& --\\
 \end{tabular}\caption{\footnotesize Test 2, Case 1: Distances of the real eigenvalues of $A$ from the closest eigenvalue of $A_N$ for different values of $N$. A ``--'' denotes a value below 1.e-15.}
 \label{tab:d2}
\end{table}

Concerning Test 3, the matrix
$A$ has a set $\mathcal S$ of 22 eigenvalues, shown in Figure
\ref{fig:test3}, that can be grouped into 3 subsets $\mathcal S_1$,
$\mathcal S_2$, $\mathcal S_3$. The subset $\mathcal S_1$ is formed by
4 entries of modulus in the range $[0.25,1.7]$, while $\mathcal S_2$
and $\mathcal S_3$ are formed by 9 entries of modulus roughly 20, and
30, respectively. Recall that the Mignotte-like polynomial $z^ma(z)$ has a tight cluster formed by three ill-conditioned zeros. 
For $\lambda\in\mathcal S$ the polynomial $z^m(a(z)-\lambda)$ still has a
cluster of ill-conditioned zeros, where the cluster is tighter and
consequently the zeros are more ill-conditioned the smaller is $|\lambda|$.
This explains why the errors of the algorithm based on the Vandermonde
formulation are much higher in the leftmost part of the graph shown in
Figure \ref{fig:test3}. 

\begin{figure}[ht]\centering
\includegraphics[scale=0.33]{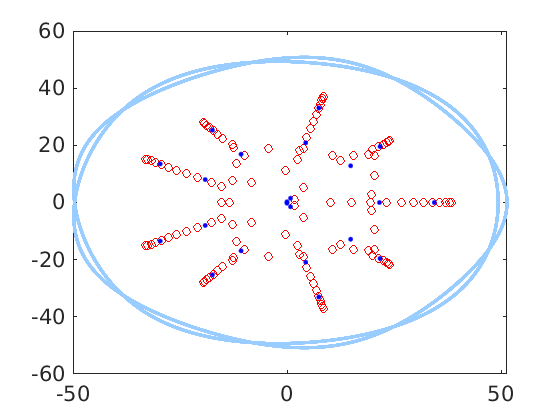}
\includegraphics[scale=0.33]{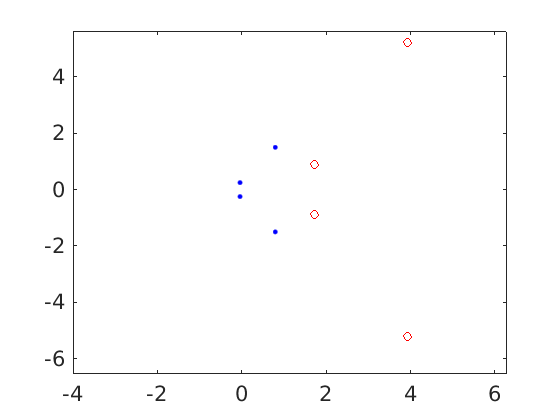}
\includegraphics[scale=0.33]{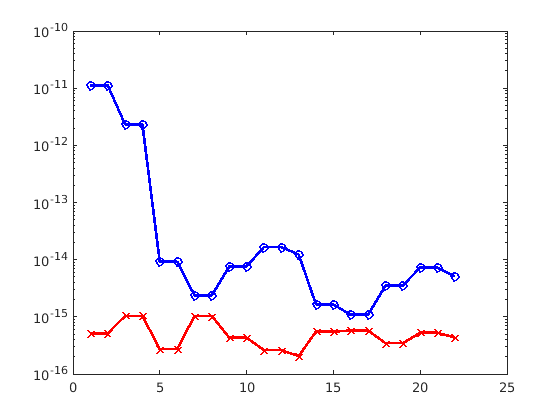}
\caption{\footnotesize Test 3. From the left: Geometry of the eigenvalues with a zoom of the cluster computed by Algorithm F; relative errors for each eigenvalue computed by Algorithm V (blue circle) and by Algorithm F (red cross), where eigenvalues are sorted by increasing modulus.}\label{fig:test3}
\end{figure}

\section{Conclusions and open problems}
The problem of computing the eigenvalues of a QT matrix has been
reformulated as a nonlinear eigenvalue problem. Newton's iteration has
been analyzed for this task both in the Vandermonde version and in the
Frobenius version. As initial approximation for starting the iteration 
we use the eigenvalues of the truncated matrix $A_N$.
 Numerical experiments
show the effectiveness of our approach. The algorithm based on the
Frobenius formulation turned out to be more accurate even though
slightly slower.  Approximating all the eigenvalues to the machine
precision directly from the eigenvalues of the truncated matrix $A_N$, without
using Newton's iteration, is shown to be infeasible due to the huge
values needed for $N$. A Matlab implementation of the algorithm has been provided
and the software has been included in the CQT-toolbox of \cite{bmr}.

In order to make the software more robust and effective we plan to
provide an optimized implementation of polynomial spectral
factorization relying on the algorithms of \cite{BH13} and
\cite{BH14}. Another important issue is to find theoretical estimates
of the truncation parameter $N$ that guarantees the approximation to
all the eigenvalues of $A$, starting from those of $A_N$.  
Other approaches to solving the nonlinear eigenvalue problem, say the ones based on rational approximation, could be the subject of subsequent research.

\bibliographystyle{abbrv}


\end{document}